\documentclass{amsart}
\usepackage{amsthm}
\usepackage{amsmath}
\usepackage{amssymb}
\usepackage{euscript}
\usepackage{graphicx}
\usepackage{pgf,tikz}
\usetikzlibrary{arrows}

\newtheorem{theorem}{Theorem}
\newtheorem*{theorem17}{Theorem 17}
\newtheorem*{theorem18}{Theorem 18}
\newtheorem*{theorem28}{Theorem 28}
\newtheorem{corollary}[theorem]{Corollary}

\newtheorem{lemma}[theorem]{Lemma}
\newtheorem{remark}[theorem]{Remark}

\newtheorem{dfn}[theorem]{Definition}

\newcommand{\N}{\mathbb{N}}

\newcommand{\Z}{\mathbb{Z}}

\newcommand{\ns}{\varnothing}

\newcommand{\invlim}[2]{\lim\limits_{\longleftarrow}\{#1,#2\}}

\newcommand{\seq}[1]{\langle #1\rangle}
\newcommand{\toc}{\mathcal{TOC}}
\newcommand{\pdoc}{\mathcal{PDOC}}
\newcommand{\sys}[2]{(#1,#2)}

\begin{document}

\title[SFTs as fundamental objects in the theory of shadowing]{Shifts of finite type as fundamental objects in the theory of shadowing}

\author[C. Good]{Chris Good}
\address[C. Good]{University of Birmingham\\ School of Mathematics\\ Birmingham B15 2TT, UK}
\email[C. Good]{c.good@bham.ac.uk}
\author[J. Meddaugh]{Jonathan Meddaugh}
\address[J. Meddaugh]{University of Birmingham\\ School of Mathematics\\ Birmingham B15 2TT, UK}
\email[J. Meddaugh]{j.meddaugh@bham.ac.uk}
\thanks{The authors gratefully acknowledge support from the European Union through
funding the H2020-MSCA-IF-2014 project ShadOmIC (SEP-210195797)}

\subjclass[2000]{37B20, 54H20}
\keywords{discrete dynamical system, inverse limits, pseudo-orbit tracing, shadowing, shifts of finite type}

\begin{abstract} Shifts of finite type and the notion of shadowing, or pseudo-orbit tracing, are powerful tools in the study of dynamical systems.
	In this paper we prove that there is a deep and fundamental relationship between these two concepts. 

Let $X$ be a compact totally disconnected space and $f:X\to X$ a continuous map.
	We demonstrate that $f$  has shadowing if and only if the system $\sys{f}{X}$ is (conjugate to) the inverse limit of a directed system of shifts of finite type.  In particular, this implies that, in the case that $X$ is the Cantor set, $f$ has shadowing if and only if $(f,X)$ is the inverse limit of a sequence of shifts of finite type. Moreover, in the general compact metric case, where $X$ is not necessarily totally disconnected, we prove that  $f$ has shadowing if and only if $\sys{f}{X}$ is a factor of (i.e.  semi-conjugate to) the inverse limit of a sequence of shifts of finite type by a quotient that almost lifts pseudo-orbits.
\end{abstract}

\maketitle

\section{Introduction}


Given a finite set of symbols, a shift of finite type consists of all infinite (or bi-infinite) symbol sequences, which do not contain any of a finite list of forbidden words, under the action of the shift map.  Shifts of finite  type have applications across mathematics, for example in Shannon's theory of information \cite{Shannon} and statistical mechanics.  In particular, they have proved to be a powerful and ubiquitous tool in the study of hyperbolic dynamical systems.  Adler and Weiss \cite{adler-weiss} and Sinai \cite{sinai}, for example,  obtain Markov partitions for hyperbolic automorphisms of the torus and Anosov diffeomorphisms respectively, allowing analysis via shifts of finite type.
Generalising the notion of Anosov diffeomorphisms, Smale \cite{smale-differentiable} isolates subsystems conjugate to shifts of finite type  in certain Axiom A diffeomorphisms. His fundamental example of a horseshoe, conjugate to the full shift space on two symbols, captures the chaotic behaviour of the diffeomorphism on the nonwandering set where the map exhibits hyperbolic behaviour.  Bowen \cite{bowen-markov-partitions} then shows that the nonwandering set of any Axiom A diffeomorphism is a factor of a shift of finite type.  In fact, shifts of finite type appear as horseshoes in many systems both hyperbolic (for example \cite{Smale, williams}) and otherwise \cite{kennedy-yorke}.

For a map $f$ on a metric space $X$, a sequence $\seq{x_i}_{i\in\omega}$ is a $\delta$-pseudo-orbit if $d(f(x_i), x_{i+1})<\delta$.  Pseudo-orbits arise
naturally in the numerical calculation of orbits. It turns out that pseudo-orbits can often be tracked within a specified tolerance by real orbits, in which case $f$ is said to have the shadowing, or pseudo-orbit tracing, property.  Clearly this is of importance when trying to model a system numerically (for example \cite{Corless, Corless2, palmer, Pearson}), especially when the system is expanding and errors might grow exponentially (indeed shadowing follows from expansivity for open maps \cite{PEsakai}, see also \cite{przytycki-urbanski}).
However, shadowing is also of theoretical importance and the notion can be traced back to the analysis of Anosov and Axiom A diffeomorphisms.  Sinai \cite{sinai-measures} isolated subsystems of Anosov diffeomorphisms with shadowing and Bowen \cite{Bowen} proved explicitly that for the larger class of Axiom A diffeomorphisms, the shadowing property holds on the nonwandering set.  However, Bowen \cite{bowen-markov-partitions} had already used shadowing implicitly as a key step in his proof that the nonwandering set of an Axiom A diffeomorphism is a factor of a shift of finite type. The notion of structural stability of a dynamical system was instrumental in the definitions of both Anosov and Axiom A diffeomorphisms \cite{smale-differentiable} and shadowing plays a key role in stability theory \cite{Pil, robinson-stability, walters}.  Shadowing is also key to characterizing omega-limit sets \cite{BGOR-DCDS, Bowen,  MR}. Moreover, fundamental to the current paper is Walters' result \cite{walters} that a shift space has shadowing if and only if it is of finite type.

\medskip

In this paper we prove that there is a deep and fundamental relationship between shadowing and shifts of finite type. It is known that shadowing is generic for homeomorphisms of the Cantor set \cite{bernardes-darji} and that the shifts of finite type form a dense subset of the space of homeomorphisms on the Cantor set \cite{shimomura}. Hirsch \cite{hirsch-expanding} shows that expanding differentiable maps on closed manifolds are factors of the full one sided shift.  In \cite{bowen-markov-partitions}, Bowen considers the induced dynamics on the shift spaces associated with Markov partitions to show that the action of an Axiom A diffeomorphism on its non-wandering set is a factor of a shift of finite type. In this paper, we expand the scope of this type of analysis by considering the actions induced by $f$ on shift spaces associated with several arbitrary finite open covers of the state space $X$, rather than the much more specific Markov partitions.  In doing so, we are able to extend and clarify these results significantly. proving the following.

\begin{theorem17}
	Let $X$ be a compact, totally disconnected Hausdorff space. The map $f:X\to X$ has shadowing if and only if $\sys{f}{X}$ is conjugate to the inverse limit of a directed system of shifts of finite type.
\end{theorem17}

\begin{theorem18} Let $X$ be the Cantor set, or indeed any compact, totally disconnected metric space.
  The map $f:X\to X$  has shadowing if and only if $\sys{f}{X}$ is conjugate to the inverse limit of a sequence of shifts of finite type.
\end{theorem18}

Let $X$ and $Y$ be compact metric spaces and $\phi:X\to Y$ be a factor map (or semiconjugacy) between the systems $f:X\to X$ and $g:Y\to Y$ (so that $\phi\big(f(x)\big)=g\big(\phi(x)\big)$).  We say that $\phi$ almost lifts pseudo-orbits if and only if for all $\epsilon>0$ and $\eta>0$, there exists $\delta>0$ such that for any $\delta$-pseudo-orbit $\seq{y_i}$ in $Y$, there exists an $\eta$-pseudo-orbit $\seq{x_i}$ in $X$ such that $d(\phi(x_i),y_i)<\epsilon$.

\begin{theorem28} Let $X$ be a compact metric space. The map $f:X\to X$ has shadowing if and only if $(f,X)$ is semiconjugate to the inverse limit of a sequence of shifts of finite type by a map which almost lifts pseudo-orbits.
\end{theorem28}

The approach we take is topological rather than metric as this seems to provide the most natural proofs and allows for simple generalization, at least in the zero-dimensional case.  Although we are considering inverse limits of dynamical systems, our techniques are very similar in flavour to the inverse limit of coupled graph covers which have been used by a number of authors to study dynamics on Cantor sets, for example \cite{bernardes-darji,danilenko,downarowicz-maass,fernandez-good-puljiz, shimomura-special, shimomura-scrambled, shimomura-0dim}.

\medskip

The paper is arranged as follows.  In Section 2, we formally define shadowing, shift of finite type and the inverse limit of a direct set of dynamical systems.  In Section 3, we  characterize shadowing as a topological, rather than metric property, and prove that an inverse limit of systems with shadowing itself has shadowing (Theorem \ref{InverseLimitsPreserveShadowing}). Here we also introduce the orbit and pseudo-orbit shift spaces associated with a finite open cover of a dynamical system and observe in Theorem \ref{CoversEncode} that these capture the dynamics of $f$. Section 4 discusses compact, totally disconnected Hausdorff, but not necessarily metric, dynamical systems, showing that such systems have shadowing of and only if they are (conjugate to) the inverse limit of a directed set of shifts of finite type (Theorem \ref{TotallyDiscClass}). In Section 5, we examine the case of general metric systems, establishing in Theorem \ref{GenNecessary} a partial analogue to Theorem \ref{TotallyDiscClass}. In Section 6, we discuss factor maps which preserve shadowing and, in light of this, we are able to completely characterize compact metric systems with shadowing in Theorem \ref{GenClass}.




\section{Preliminaries and Definitions}\label{prelim}

By map, we mean a continuous function. The set of natural numbers (including 0) is denoted by $\omega$.

\begin{dfn} Let $X$ be a compact metric space and let $f:X\to X$ be a  continuous function.
Let $\seq {x_i}$ be a sequence in $X$. Then $\seq {x_i}$ is a \emph{$\delta$-pseudo-orbit} provided $d(x_{i+1},f(x_i))<\delta$ for all $i\in\omega$ and the point $z$ \emph{$\epsilon$-shadows} $\seq {x_i}$ provided $d(x_i,f^i(z))<\epsilon$ for all $i\in\omega$.

The map $f$ has \emph{shadowing} (or the \emph{pseudo-orbit tracing property}) provided that for all $\epsilon>0$ there exists $\delta>0$ such that every $\delta$-pseudo-orbit is $\epsilon$-shadowed by a point.
\end{dfn}


A particularly nice characterization of shadowing exists if we restrict our attention to \emph{shift spaces}. For a finite set $\Sigma$, the \emph{full one-sided shift with alphabet $\Sigma$} consists of the space of infinite sequence in $\Sigma$, i.e. $\Sigma^\omega$ using the product topology on the discrete space $\Sigma$ and the \emph{shift map} $\sigma$, given by
\[\sigma\seq{x_i}=\seq{x_{i+1}}.\]
A \emph{shift space} is a compact invariant subset $X$ of some full-shift. A shift space $X$ is a \emph{shift of finite type over alphabet $\Sigma$} if there is a finite collection $\mathcal F$ of finite words in $\Sigma$ for which $\seq{x_i}\in\Sigma^\omega$ belongs to $X$ if and only if for all $i\leq j$, the word $x_ix_{i+1}\cdots x_j\notin\mathcal F$. A shift of finite type is said to be \emph{$N$-step} provided that the length of the longest word in its associated set of forbidden words $\mathcal F$ is $N+1$. As mentioned above, a shift space has shadowing if and only if it a shift of finite type \cite{walters}.

Inverse limit constructions arise in a variety of settings. Many of the results here hold for arbitrary (non-metric) compact Hausdorff spaces and so we consider inverse limits of dynamical systems taken along an arbitrary directed set. The reader will not miss much by assuming that the space is compact metric in which case the inverse  limit may be indexed by $\mathbb N$.

\begin{dfn}
	Let $(\Lambda,\leq)$ be a directed set. For each $\lambda\in\Lambda$, let $X_\lambda$ be a compact Hausdorff space and, for each pair $\lambda\leq\eta$, let $g_\lambda^\eta:X_\eta\to X_\lambda$ be a continuous surjection. Then $(g_\lambda^\eta,X_\lambda)$ is called an \emph{inverse system} provided that
	\begin{enumerate}
		\item $g^\lambda_\lambda$ is the identity map, and
		\item for $\lambda\leq\eta\leq\nu$, $g_\lambda^\nu=g_\lambda^\eta\circ g_\eta^\nu$.
	\end{enumerate}
	The \emph{inverse limit of $(g_\lambda^\eta,X_\lambda)$} is the space
	\[\invlim{g_\lambda^\eta}{X_\lambda}=\{\seq{x_\lambda}\in\Pi X_\lambda: \forall \lambda\leq\eta x_\lambda=g_\lambda^\eta(x_\eta)\}\]
	with topology inherited as a subspace of the product $\Pi X_\lambda$.
\end{dfn}

It is well known \cite{engelking} that the inverse limit of compact Hausdorff spaces is itself compact and Hausdorff. Additionally, the following easily proved fact is often useful. If $U\subseteq \invlim{g_\lambda^\eta}{X_\lambda}$ is open, and $x\in U$, then there exists $\lambda$ and $U_\lambda\subseteq X_\lambda$ open with $x\in \pi^{-1}_\lambda(U_\lambda)\cap\invlim{g_\lambda^\eta}{X_\lambda}\subseteq U$. That is, the collection of sets of the form $\pi^{-1}_\lambda(U_\lambda)\cap\invlim{g_\lambda^\eta}{X_\lambda}$ for $U_\lambda$ open in $X_\lambda$ forms a basis for $\invlim{g_\lambda^\eta}{X_\lambda}$.

Now, suppose that for each $\lambda$ in  the directed set $\Lambda$,  $f_\lambda:X_\lambda\to\lambda$ is a continuous function. If the \emph{bonding maps} $g_\lambda^\eta$ commute with the functions $f_\lambda$, then we can extend this definition to the family  of  dynamical systems $\{\sys{f_\lambda}{X_\lambda}:\lambda\in\Lambda\}$.  Specifically we make the following definition.

\begin{dfn}
	Let $(\Lambda,\leq)$ be a directed set. For each $\lambda\in\Lambda$, let $\sys{f_\lambda}{X_\lambda}$ be a dynamical system on a compact Hausdorff space and, for each pair $\lambda\leq\eta$, let $g_\lambda^\eta:X_\eta\to X_\lambda$ be a continuous surjection. Then $(g_\lambda^\eta,\sys{f_\lambda}{X_\lambda})$ is called an \emph{inverse system} provided that
	\begin{enumerate}
		\item $g^\lambda_\lambda$ is the identity map, and
		\item for $\lambda\leq\eta\leq\nu$, $g_\lambda^\nu=g_\lambda^\eta\circ g_\eta^\nu$, and
		\item for $\lambda\leq\eta$, $f_\lambda\circ g_\lambda^\eta=g_\lambda^\eta\circ f_\eta$ (i.e. that $g_\lambda^\eta$ is a semiconjugacy).
			\end{enumerate}
	The \emph{inverse limit of $(g_\lambda^\eta,\sys{f_\lambda}{X_\lambda})$} is the dynamical system $\sys{(f_\lambda)^*}{\invlim{g_\lambda^\eta}{X_\lambda}}$, where $(f_\lambda)^*$ is the \emph{induced map} given by
	\[(f_\lambda)^*\big(\seq{x_\lambda}\big)=\big(f_\lambda(x_\lambda)\big).\]
\end{dfn}

That this defines a continuous dynamical system is a routine exercise. It is also immediate that if each of the maps $f_\lambda$ is surjective, then the {induced map} $(f_\lambda)^*$ is also surjective.

\section{Shadowing without Metrics}

Shadowing is on first inspection a metric property, and indeed the properties of metrics often play a role in the its investigation and application. However, we note in this section that shadowing can be viewed as a strictly topological property, provided that we restrict our attention to compact metric spaces. Similar observations have been made in \cite{brian-abstract, good-macias}.

\begin{dfn}
  Let $X$ be a space, let $f:X\to X$, and let $\mathcal U$ be a finite open cover of $X$.
  \begin{enumerate}
    \item The sequence $\seq{x_i}_{i\in\omega}$ is a \emph{$\mathcal U$-pseudo-orbit} provided for every $i\in\omega$, there exists $U_{i+1}\in\mathcal U$ with  $x_{i+1},f(x_{i})\in U_{i+1}$.  In this case, we say that the sequence $\seq{U_i}$ is a \emph{$\mathcal U$-pseudo-orbit pattern}.
    \item The point $z\in X$ $\mathcal U$-shadows $\seq{x_i}_{i\in\omega}$ provided for each $i\in\omega$ there exists $U_i\in\mathcal U$ with  $x_i,f^i(z)\in U_i$. We call the sequence $\seq {U_i}$ an \emph{$\mathcal U$-orbit pattern}.
  \end{enumerate}
\end{dfn}

\begin{lemma}\label{CompactShadowing}
	Let $X$ be a compact metric space. Then $f:X\to X$ has shadowing if and only if for every finite open cover $\mathcal U$, there exists a finite open cover $\mathcal V$, such that every $\mathcal V$-pseudo-orbit is $\mathcal U$-shadowed by some point $z\in X$. \end{lemma}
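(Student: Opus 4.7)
The plan is a routine two-way translation between the metric formulation of shadowing and the open-cover formulation, using Lebesgue numbers in one direction and refinement by small-diameter covers in the other. Both directions rely only on compactness of $X$ and the basic Lebesgue number lemma.

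For the forward implication, suppose $f$ has shadowing and let $\mathcal U$ be a finite open cover. Let $\epsilon>0$ be a Lebesgue number for $\mathcal U$, so that every subset of $X$ of diameter less than $\epsilon$ lies in some member of $\mathcal U$. Apply shadowing to produce $\delta>0$ such that every $\delta$-pseudo-orbit is $\epsilon$-shadowed, and then, using compactness, choose a finite open cover $\mathcal V$ of $X$ whose elements all have diameter less than $\delta$. Any $\mathcal V$-pseudo-orbit $\seq{x_i}$ has $x_{i+1}$ and $f(x_i)$ in a common element of $\mathcal V$, so $d(x_{i+1},f(x_i))<\delta$; thus $\seq{x_i}$ is a $\delta$-pseudo-orbit and is $\epsilon$-shadowed by some $z\in X$. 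For each $i$, the pair $\{x_i,f^i(z)\}$ has diameter less than $\epsilon$ and so lies in some $U_i\in\mathcal U$, witnessing $\mathcal U$-shadowing.

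For the reverse implication, given $\epsilon>0$, use compactness to choose a finite open cover $\mathcal U$ by sets of diameter less than $\epsilon$. Apply the hypothesis to get a finite open cover $\mathcal V$ such that every $\mathcal V$-pseudo-orbit is $\mathcal U$-shadowed, and let $\delta>0$ be a Lebesgue number for $\mathcal V$. If $\seq{x_i}$ is a $\delta$-pseudo-orbit, then for each $i$ the two-point set $\{x_{i+1},f(x_i)\}$ has diameter less than $\delta$ and so is contained in some $V_{i+1}\in\mathcal V$; hence $\seq{x_i}$ is a $\mathcal V$-pseudo-orbit. By hypothesis it is $\mathcal U$-shadowed by some $z\in X$, and because each element of $\mathcal U$ has diameter less than $\epsilon$, we conclude $d(x_i,f^i(z))<\epsilon$ for every $i$, establishing $\epsilon$-shadowing.

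There is no real obstacle here; the only step that requires any care is applying the Lebesgue number lemma to the two-point set $\{x_{i+1},f(x_i)\}$ (rather than to a ball), but this is immediate because the Lebesgue number controls all subsets of small diameter, not only balls. The entire argument goes through verbatim for any compact space admitting a compatible metric, which is precisely the hypothesis of the lemma.
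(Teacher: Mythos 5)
Your proof is correct and follows essentially the same route as the paper's: small-diameter covers to pass from $\mathcal V$-pseudo-orbits to $\delta$-pseudo-orbits, and a Lebesgue-number (or $\epsilon$-ball) argument to pass from metric shadowing to $\mathcal U$-shadowing. The only difference is cosmetic — you invoke the Lebesgue number lemma on two-point sets where the paper works with $\epsilon$- and $\epsilon/2$-balls.
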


\begin{proof}
	First, suppose that $f$ has the shadowing property and let $\mathcal U$ be a finite open cover of $X$. Fix $\epsilon>0$ so that for each $\epsilon$-ball $B$ in $X$, there exists $U\in\mathcal U$ with $\overline B\subseteq U$. Now, let $\delta>0$ witness $\epsilon$-shadowing. Let $\mathcal V$ be a finite open cover of $X$ refining $\mathcal U$ which consists of open sets of diameter less that $\delta$.
	
	Now, Let $\seq{x_i}$ be a sequence in $X$ as in the statement of the lemma. Then $d(x_i,f(x_{i-1}))<diam(V_i)<\delta$ for all $i\in\omega\setminus{0}$. In particular, $\seq{x_i}$ is a $\delta$-pseudo-orbit. Let $z\in X$ be an $\epsilon$-shadowing point for this sequence. Then $d(x_i,f^i(z))<\epsilon$ for each $i\in\omega$, and in particular, $\{x_i,f^i(x)\}\subseteq B_{\epsilon}(x_i)$. By construction, there exists $U_i\in\mathcal U$ for which $\{x_i,f^i(x)\}\subseteq B_{\epsilon}(x_i)\subseteq U_i$, satisfying the conclusion of the lemma.
	
	Conversely, let us suppose that $f$ satisfies the open cover condition of the lemma. Let $\epsilon>0$, and consider a finite subcover $\mathcal U$ of $X$ consisting of $\epsilon/2$-balls. Let $\mathcal V$ be the cover that witnesses the satisfaction of the condition, and choose $\delta>0$ such that for each $\delta$-ball in $X$, there is an element of $\mathcal V$ which contains it.
	
	Now, fix a $\delta$-pseudo-orbit $\seq{x_i}$. Then for each $i\in\omega\setminus{0}$, $d(x_i,f(x_{i-1}))<\delta$, and hence there exists $V_i\in\mathcal V$ such that $x_i,f(x_{i-1})\in V_i$. Let $z\in X$ be the point guaranteed by the open cover condition. Then, for each $i\in\omega$, there exists $U_i\in\mathcal U$ with $x_i,f^i(z)\in U_i$. But $U_i$ is an $\epsilon/2$-ball and hence $d(x_i,f^i(x))<\epsilon$, i.e. $z$ $\epsilon$-shadows the pseudo-orbit.
\end{proof}

This observation allows the decoupling of shadowing from the metric, and we can then take the following definition of shadowing, which is valid for maps on any compact topological space and in particular for systems with compact Hausdorff domain, an application that has recently seen increased interest \cite{Brian-Ramsey,good-macias}.

\begin{dfn} Let $X$ be a (nonempty) topological space. The
	map $f:X\to X$ has \emph{shadowing} provided that for every finite open cover $\mathcal U$, there exists a finite open cover $\mathcal V$ such that
every $\mathcal V$-pseudo-orbit is $\mathcal U$-shadowed by a point of $X$.
\end{dfn}

With this definition in mind, we can prove the following result which will be important to the characterization of shadowing in Section \ref{TotDisc}.

\begin{theorem}\label{InverseLimitsPreserveShadowing}
	Let $f:X\to X$ be conjugate to an inverse limit of maps with shadowing on compact spaces. Then $f$ has shadowing.
\end{theorem}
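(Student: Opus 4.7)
The plan is to reduce the problem, via conjugacy (which preserves the open-cover characterization of shadowing just established), to showing that the induced map $f^* = (f_\lambda)^*$ on the inverse limit $X^* = \invlim{g_\lambda^\eta}{X_\lambda}$ has shadowing, given that each $f_\lambda \colon X_\lambda \to X_\lambda$ does. So, fix a finite open cover $\mathcal U$ of $X^*$; I will construct a witnessing finite open cover $\mathcal V$ by pushing the shadowing problem down to a single coordinate $\lambda_0$ and lifting the shadowing point back up.

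First I would descend $\mathcal U$ to a single level. By the basis description in the preliminaries, each $x \in X^*$ has a neighbourhood $\pi_{\lambda_x}^{-1}(U_x)$ contained in some $W_x \in \mathcal U$. Compactness of $X^*$ yields a finite subcover, and directedness of $\Lambda$ gives a single $\lambda_0$ dominating all the resulting indices. Using $g_{\lambda_x}^{\lambda_0} \circ \pi_{\lambda_0} = \pi_{\lambda_x}$, I can rewrite each $\pi_{\lambda_x}^{-1}(U_x)$ as $\pi_{\lambda_0}^{-1}\bigl((g_{\lambda_x}^{\lambda_0})^{-1}(U_x)\bigr)$. This produces a finite family $\mathcal U_0$ of open subsets of $X_{\lambda_0}$, which is in fact a cover because $\pi_{\lambda_0}$ is surjective (standard for inverse limits of compact Hausdorff spaces with surjective bonding maps), and such that $\pi_{\lambda_0}^{-1}(U) \subseteq W$ for some $W \in \mathcal U$ for every $U \in \mathcal U_0$.

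Next I would apply shadowing of $f_{\lambda_0}$ to $\mathcal U_0$, producing a finite open cover $\mathcal V_0$ of $X_{\lambda_0}$ such that every $\mathcal V_0$-pseudo-orbit is $\mathcal U_0$-shadowed. Then $\mathcal V = \{\pi_{\lambda_0}^{-1}(V) : V \in \mathcal V_0\}$ is a finite open cover of $X^*$. Given any $\mathcal V$-pseudo-orbit $\seq{x_i}$ in $X^*$, the intertwining $\pi_{\lambda_0} \circ f^* = f_{\lambda_0} \circ \pi_{\lambda_0}$ shows $\seq{\pi_{\lambda_0}(x_i)}$ is a $\mathcal V_0$-pseudo-orbit in $X_{\lambda_0}$, hence $\mathcal U_0$-shadowed by some $z_0$. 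Pick any $z^* \in \pi_{\lambda_0}^{-1}(z_0)$; the intertwining again gives $\pi_{\lambda_0}(f^{*i}(z^*)) = f_{\lambda_0}^i(z_0)$, so $\pi_{\lambda_0}(x_i)$ and $\pi_{\lambda_0}(f^{*i}(z^*))$ share some element $U_i \in \mathcal U_0$, which means $x_i$ and $f^{*i}(z^*)$ share $\pi_{\lambda_0}^{-1}(U_i)$, hence share some element of $\mathcal U$.

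The main technical obstacle is the first step: reducing an arbitrary finite cover of $X^*$ to one that factors through a single $X_{\lambda_0}$, which is where the directedness of $\Lambda$ combines with compactness of $X^*$. Surjectivity of the canonical projection $\pi_{\lambda_0}$ is also needed both to guarantee $\mathcal U_0$ is a cover of $X_{\lambda_0}$ (rather than merely of $\pi_{\lambda_0}(X^*)$) and to lift the shadowing point $z_0$ back to $X^*$; the rest is a routine diagram chase using the bonding-map commutation relations.
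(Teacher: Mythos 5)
Your proof is correct and follows essentially the same route as the paper's: reduce the given cover of the inverse limit to one pulled back from a single coordinate $\lambda_0$ (the paper asserts this refinement step as standard, whereas you spell out the compactness-plus-directedness argument), apply shadowing of $f_{\lambda_0}$ there, and lift the shadowing point using surjectivity of $\pi_{\lambda_0}$. No gaps.
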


\begin{proof}
	Without loss, let $(\Lambda,\leq)$ be a directed set and  $\sys{f}{X}=\invlim {g^\lambda_\gamma}{(f_\lambda,X_\lambda)}$ where each of $(f_\lambda,X_\lambda)$ is a system with shadowing on a compact space.
	
	Let $\mathcal U$ be a finite open cover of $X$. Since $X=\invlim{g^\lambda_\gamma}{X_\lambda}$, we can find $\lambda$ and a finite open cover $\mathcal W_\lambda$ of $X_\lambda$ so that $\mathcal W'=\{\pi_\lambda^{-1}(W)\cap X:W\in\mathcal W_\lambda\}$ refines $\mathcal U$. Since $f_\lambda$ has shadowing, we can find a finite open cover $\mathcal V_\lambda$ of $X_\lambda$ witnessing this. Let $\mathcal V=\{\pi_\lambda^{-1}(V)\cap X:V\in\mathcal V_\lambda\}$.
	
	Now, let $\seq{x_i}$ be a $\mathcal V$-pseudo-orbit with pattern $\seq {\pi_\lambda^{-1}(V_i)\cap X}$. Then for each $i\in\omega$, we have $f(\overline {\pi_\lambda^{-1}(V_i)\cap X})\cap\overline {\pi_\lambda^{-1}(V_{i+1})\cap X}\neq\ns$. It follows then, that $\seq{(x_i)_\lambda }$ is a $\mathcal V_\lambda$-pseudo-orbit with pattern $\seq{V_i}$. Since $f_\lambda$ has shadowing, it follows then that there is a sequence $\seq{W_i}$ in $\mathcal W$ with $(x_i)_\lambda\in W_i$ and with $\bigcap f_\lambda^{-i}(\overline W_i)\neq\ns$. It then follows that $x_i\in\pi_\lambda^{-1}(W_i)$ and $\bigcap f^{-i}(\overline {\pi_\lambda^{-1}(W_i)})\neq\ns$, i.e. the pseudo-orbit $\seq{x_i}$ is $\mathcal W'$-shadowed, and hence $\mathcal U$-shadowed as required.
\end{proof}

Now, the dynamics of the map $f$ on $X$ induce dynamics on the \emph{space} of (pseudo-) orbit patterns. This is especially transparent if we take reasonably nice covers. In particular, we are restrict our attention to  \emph{taut} covers, those for which the closure of elements meet only if the elements themselves meet.

\begin{dfn}
	Let $f:X\to X$ be a map on the space $X$, let $\mathcal U$ be a taut open cover and let $\mathcal U^\omega$ be the one-sided shift space on the alphabet $\mathcal U$ with shift map $\sigma$. 	
	\begin{enumerate}
		\item The \emph{$\mathcal U$-orbit space} is the set $\mathcal O(\mathcal U)\subseteq \mathcal U^\omega$ consisting of all sequences $\seq{U_i}$ in $\mathcal U$ for which there exists $z\in X$ with $f^i(z)\in\overline{U_i}$,
		\item The \emph{$\mathcal U$-pseudo-orbit space} is the set $\mathcal {PO}(\mathcal U)\subseteq \mathcal U^\omega$ consisting of all sequences $\seq{U_i}$ in $\mathcal U$ for which there exists a sequence $\seq{x_i}$ with $x_{i+1},f(x_i)\in \overline U_{i+1}$.
	\end{enumerate}
	Additionally, for $U\in\mathcal U$ and $i\in\omega$, define $\pi_i:\mathcal U^\omega\to\mathcal U$ to be projection onto the $i$-th coordinate.
\end{dfn}

The following lemma is immediate and provides an alternate description of $\mathcal O(\mathcal U)$ and $\mathcal {PO}(\mathcal U)$.

\begin{lemma}\label{AltDesc}
	Let $f:X\to X$ be a map on $X$ and let $\mathcal U$ be a taut open cover. Then
	\[\mathcal O(\mathcal U)=\{\seq{U_i}\in \mathcal U^\omega: \bigcap f^{-i}(\overline U_i)\neq\ns \}\] and\[ \mathcal PO(\mathcal U)=\{\seq{U_i}\in\mathcal U^\omega: f(\overline U_{i})\cap \overline U_{i+1}\neq \ns\}.\]
\end{lemma}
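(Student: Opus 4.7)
The plan is to observe that both identities are essentially reformulations of the definitions, so I expect this proof to be quite short; the lemma itself is described as ``immediate'' in the text, so the goal is just to record the unpacking. I would handle each of the two equalities in turn.

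For the orbit-space identity, the argument is purely set-theoretic. By the definition of $\mathcal{O}(\mathcal U)$, a sequence $\seq{U_i}$ belongs to this set precisely when there exists some $z\in X$ with $f^i(z)\in\overline{U_i}$ for every $i\in\omega$. But $f^i(z)\in\overline{U_i}$ is literally the statement $z\in f^{-i}(\overline{U_i})$, so the existence of such a $z$ is exactly the assertion that $\bigcap_{i\in\omega} f^{-i}(\overline{U_i})\neq\ns$. No compactness or tautness is needed for this direction.

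For the pseudo-orbit space identity, I would argue both containments. For the forward inclusion, suppose $\seq{x_i}$ witnesses $\seq{U_i}\in\mathcal{PO}(\mathcal U)$; then for each $i$ we have $x_i\in\overline{U}_i$ (taking the natural convention that the witnessing sequence satisfies $x_0\in\overline{U}_0$, consistent with the indexing of the pattern) and $f(x_i)\in\overline{U}_{i+1}$, so the single point $f(x_i)$ lies in $f(\overline{U}_i)\cap\overline{U}_{i+1}$, giving nonemptiness. For the reverse inclusion, suppose $f(\overline{U}_i)\cap\overline{U}_{i+1}\neq\ns$ for every $i$; then by countable choice I may select, for each $i\in\omega$, a point $z_i\in\overline{U}_i$ with $f(z_i)\in\overline{U}_{i+1}$. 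Setting $x_i:=z_i$ yields a sequence for which $x_{i+1}=z_{i+1}\in\overline{U}_{i+1}$ and $f(x_i)=f(z_i)\in\overline{U}_{i+1}$ hold simultaneously, which is exactly the condition defining $\mathcal{PO}(\mathcal U)$.

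I do not anticipate any real obstacle. The only point that warrants a sentence of care is the boundary index $i=0$: the literal defining inequality for a $\mathcal U$-pseudo-orbit constrains only $U_{i+1}$ for $i\in\omega$, so one must read the natural condition $x_0\in\overline{U}_0$ into the definition of the pattern for the forward inclusion to go through uniformly in $i$. Once that convention is made explicit, both inclusions reduce to a line of symbol-pushing.
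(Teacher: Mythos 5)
Your proof is correct and matches the paper, which simply declares the lemma immediate: both equalities are direct unpackings of the definitions of $\mathcal O(\mathcal U)$ and $\mathcal{PO}(\mathcal U)$, exactly as you carry out. Your explicit handling of the index-$0$ convention (reading $x_0\in\overline{U}_0$ into the definition of a pseudo-orbit pattern) is a reasonable and slightly more careful treatment of a point the paper leaves implicit.
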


As consequence, we have the following relations between $\mathcal O(\mathcal U)$, $\mathcal {PO}(\mathcal U)$ and $\mathcal U^\omega$.

\begin{lemma}\label{FirstOrbitSpaceRelations}
	Let $f:X\to X$ be a map on $X$ and let $\mathcal U$ be a taut open cover. Then, $\mathcal O(\mathcal U)$ is a subset of $\mathcal{PO}(\mathcal U)$ and both spaces are subshifts of $\mathcal U^\omega$. In particular, $\mathcal{PO}(\mathcal U)$ is a 1-step shift of finite type.
\end{lemma}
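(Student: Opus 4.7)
The plan is to verify the three assertions in sequence, using the alternate descriptions from Lemma \ref{AltDesc} as the primary handle on both spaces.

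For $\mathcal O(\mathcal U)\subseteq\mathcal{PO}(\mathcal U)$, I would argue directly from the definitions: if $\seq{U_i}\in\mathcal O(\mathcal U)$ is witnessed by $z\in X$ with $f^i(z)\in\overline{U_i}$, then the sequence $x_i:=f^i(z)$ satisfies $x_{i+1}=f(x_i)\in\overline{U_{i+1}}$ automatically, so $\seq{U_i}\in\mathcal{PO}(\mathcal U)$.

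For the claim that $\mathcal{PO}(\mathcal U)$ is a $1$-step shift of finite type, I would read off from Lemma \ref{AltDesc} that $\seq{U_i}\in\mathcal{PO}(\mathcal U)$ if and only if no consecutive pair $(U_i,U_{i+1})$ lies in the set $\mathcal F:=\{UV\in\mathcal U^2:f(\overline U)\cap\overline V=\ns\}$. Since $\mathcal U$ is finite, $\mathcal F$ is a finite list of forbidden words of length $2$, so $\mathcal{PO}(\mathcal U)$ is by definition a $1$-step SFT and in particular a subshift.

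It remains to show $\mathcal O(\mathcal U)$ is a subshift. Shift-invariance is immediate: if $z$ witnesses $\seq{U_i}\in\mathcal O(\mathcal U)$, then $f(z)$ witnesses $\sigma\seq{U_i}=\seq{U_{i+1}}$. For closedness, suppose $\seq{U_i^{(\alpha)}}$ is a net in $\mathcal O(\mathcal U)$ converging to some $\seq{U_i}\in\mathcal U^\omega$; because $\mathcal U$ is finite and discrete, for each $k$ eventually $U_i^{(\alpha)}=U_i$ for all $i\leq k$. Using Lemma \ref{AltDesc}, pick $z_\alpha\in\bigcap_i f^{-i}(\overline{U_i^{(\alpha)}})$, and pass to a convergent subnet $z_\alpha\to z\in X$ by compactness. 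For each fixed $i$, eventually $z_\alpha\in f^{-i}(\overline{U_i})$, and continuity of $f^i$ together with closedness of $\overline{U_i}$ yields $f^i(z)\in\overline{U_i}$; hence $\seq{U_i}\in\mathcal O(\mathcal U)$.

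The main obstacle is precisely this closedness step for $\mathcal O(\mathcal U)$: unlike $\mathcal{PO}(\mathcal U)$, whose membership is governed by a local, two-letter constraint, membership in $\mathcal O(\mathcal U)$ asserts the \emph{global} existence of a single shadowing point for the entire pattern, and the argument must therefore invoke compactness of $X$ to extract such a point in the limit.
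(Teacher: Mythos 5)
Your proposal is correct and follows essentially the same route as the paper: the inclusion $\mathcal O(\mathcal U)\subseteq\mathcal{PO}(\mathcal U)$ via the orbit of a witnessing point, the identification of $\mathcal{PO}(\mathcal U)$ as the $1$-step SFT forbidding words $UV$ with $f(\overline U)\cap\overline V=\ns$, and closedness of $\mathcal O(\mathcal U)$ via compactness are exactly the steps the paper declares ``immediate'' or ``clear.'' You have simply supplied the details (notably the compactness/net argument for closedness of $\mathcal O(\mathcal U)$) that the paper omits.
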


\begin{proof}
	That $\mathcal O(\mathcal U)\subseteq\mathcal{PO}(\mathcal U)\subseteq \mathcal U^\omega$ is immediate. It is also clear that all of these are shift invariant and closed, and hence are subshifts. That $\mathcal{PO}(\mathcal{U})$ is a 1-step shift of finite follows by observing that the condition that $f(\overline U_{i})\cap \overline U_{i+1}\neq \ns$ is equivalent to forbidding words of the form $UV$ where $f(\overline U)\cap\overline V=\ns$.
\end{proof}

If $X$ is compact Hausdorff, then the entire dynamics of a map $f$ are encoded in the orbit spaces of an appropriate system of covers of $X$. In particular, let $\toc(X)$ be the collection of all finite taut open covers of $X$. This collection is naturally partially ordered by refinement and forms a directed set.  

\begin{theorem}\label{CoversEncode}
	Let $f:X\to X$ be a map on the compact Hausdorff space $X$. Let $\{\mathcal U_\lambda\}_{\lambda\in\Lambda}$ be a cofinal directed subset of $\toc(X)$. Then for all $x\in X$ there exists a choice of $U_\lambda(x)\in\mathcal U_\lambda$ with $\{x\}=\bigcap \overline {U_\lambda(x)}$ and furthermore for any such sequence, we have
		for all $n\in\omega$,
	\[\{f^n(x)\}=\bigcap \pi_0(\sigma^n(\mathcal O(\mathcal U_\lambda)\cap\pi_0^{-1}(\overline U_\lambda(x)))).\]
\end{theorem}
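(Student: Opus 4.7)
My approach reduces both parts to a single topological lemma: for any $p \in X$ and any open $W \ni p$, cofinality of $\{\mathcal U_\lambda\}$ in $\toc(X)$ yields some $\lambda_0$ such that \emph{every} $U \in \mathcal U_{\lambda_0}$ with $p \in U$ satisfies $\overline U \subseteq W$. To prove this, use compactness and normality of $X$: shrink $W$ to an open $W'$ with $p \in W' \subseteq \overline{W'} \subseteq W$, then for each $y \in X \setminus \overline{W'}$ produce an open $V_y \ni y$ with $\overline{V_y} \cap \overline{W'} = \ns$; after taking a finite subcover and (if necessary) a taut refinement we obtain $\mathcal V_W \in \toc(X)$ in which $W'$ is the unique element with $p$ in its closure. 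By cofinality some $\mathcal U_{\lambda_0}$ refines $\mathcal V_W$; any $U \in \mathcal U_{\lambda_0}$ with $p \in U$ must sit inside the element of $\mathcal V_W$ containing $p$, which is forced to be $W'$, so $\overline U \subseteq \overline{W'} \subseteq W$.

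Part (1) follows at once. Fix any selector $U_\lambda(x) \in \mathcal U_\lambda$ with $x \in U_\lambda(x)$. For each $W$ in a neighborhood base at $x$ the lemma supplies $\lambda_W$ with $\overline{U_{\lambda_W}(x)} \subseteq W$, whence $\bigcap_\lambda \overline{U_\lambda(x)} \subseteq \bigcap_W W = \{x\}$ by Hausdorffness; the reverse inclusion is immediate.

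For part (2) the inclusion $\supseteq$ is an orbit-pattern construction: for each $\lambda$, choose $V_i \in \mathcal U_\lambda$ with $f^i(x) \in \overline{V_i}$ and $V_0 = U_\lambda(x)$ to obtain $\seq{V_i} \in \mathcal O(\mathcal U_\lambda) \cap \pi_0^{-1}(\overline{U_\lambda(x)})$, so that $f^n(x) \in \overline{V_n}$. For $\subseteq$ I argue by contradiction: assume $y \neq f^n(x)$ lies in every term of the intersection. Normality produces disjoint opens $V_1 \ni f^n(x)$, $V_2 \ni y$ with $\overline{V_1} \cap \overline{V_2} = \ns$; set $W_1 = V_1 \cup (X \setminus \overline{V_2})$, $W_2 = V_2 \cup (X \setminus \overline{V_1})$, both open and jointly covering $X$. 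Take a taut refinement of $\{W_1, W_2\}$ in $\toc(X)$, form a common taut refinement with the cover produced by the lemma at $p = x$, $W = f^{-n}(V_1)$, and use cofinality to pick $\lambda$ with $\mathcal U_\lambda$ refining this common cover. The lemma forces $\overline{U_\lambda(x)} \subseteq f^{-n}(V_1)$, hence $f^n(\overline{U_\lambda(x)}) \subseteq V_1$. Any $V \in \mathcal U_\lambda$ with $y \in \overline V$ must lie in $W_2$, since $V \subseteq W_1$ would give $V \cap V_2 = \ns$ and so $y \notin \overline V$; therefore $\overline V \cap V_1 = \ns$ and consequently $\overline V \cap f^n(\overline{U_\lambda(x)}) = \ns$. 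Hence no $V \in \mathcal U_\lambda$ contributes $y$ to the $\lambda$th term of the intersection, contradicting the assumption.

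The main obstacle is that ``for any such sequence'' in part (2) must be taken literally: the conclusion has to hold for every $(U_\lambda(x))$ satisfying only $\bigcap_\lambda \overline{U_\lambda(x)} = \{x\}$, and nothing in this hypothesis rules out $\overline{U_\lambda(x)}$ being large for many $\lambda$. A direct net-convergence argument on witnesses $z_\lambda \in \overline{U_\lambda(x)}$ breaks down, because the sets $\overline{U_\lambda(x)}$ need not shrink along any cofinal subnet. The resolution is to use cofinality to arrange that at a carefully chosen $\lambda$, \emph{every} element of $\mathcal U_\lambda$ containing $x$ is small, making the ambient choice of $U_\lambda(x)$ irrelevant at that $\lambda$.
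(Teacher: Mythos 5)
Your overall strategy is sound and, in the key step, genuinely different from the paper's. For part (1) and the forward inclusion of part (2) you do essentially what the paper does. For the reverse inclusion, however, the paper identifies the $\lambda$-th term with $f^n(\overline{U_\lambda(x)})$, takes witnesses $x_\lambda\in\overline{U_\lambda(x)}$ with $f^n(x_\lambda)=z$, and concludes $z=f^n(x)$ by continuity because the net $(x_\lambda)$ accumulates at $x$; you instead run a separation argument, using your cover lemma to force $f^n(\overline{U_\lambda(x)})$ into a small neighbourhood of $f^n(x)$ and then showing that no element of $\mathcal U_\lambda$ whose closure contains $y$ can meet that set. Your version has the advantage of working with the literal orbit-pattern reading of $\pi_0(\sigma^n(\cdot))$ (which is a star of $f^n(\overline{U_\lambda(x)})$, hence a priori larger than the set the paper manipulates), so you prove something at least as strong. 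On the other hand, your stated worry that a net-convergence argument ``breaks down'' is not right: your own lemma shows that for every open $W\ni x$ one has $\overline{U_\lambda(x)}\subseteq W$ for all sufficiently fine $\mathcal U_\lambda$, so the net of witnesses does converge to $x$ and the paper's continuity argument goes through; the lemma is exactly the missing justification for the paper's phrase ``by construction, $x$ is a limit point of $\{x_\lambda\}$.''

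Two details in your write-up need repair, though both are fixable with what you already have. First, in the proof of the lemma the family $\{W'\}\cup\{V_y: y\in X\setminus\overline{W'}\}$ does not cover $X$: since $\overline{V_y}\cap\overline{W'}=\ns$, no $V_y$ meets $\overline{W'}\setminus W'$, so ``taking a finite subcover'' fails as written. Replace the condition by the weaker requirement $p\notin\overline{V_y}$ and let $y$ range over $X\setminus W'$ (possible by regularity, since every such $y$ differs from $p$); then the family does cover $X$ and $W'$ remains the unique element with $p$ in its closure. Second, you state the lemma for $U\in\mathcal U_{\lambda_0}$ with $p\in U$, but you apply it to $U_\lambda(x)$, which the hypothesis ``$\bigcap\overline{U_\lambda(x)}=\{x\}$'' only guarantees satisfies $x\in\overline{U_\lambda(x)}$ — a point you yourself flag as the crux of ``for any such sequence.'' Your construction in fact yields the stronger statement you need: if $p\in\overline U$ and $U$ refines $\mathcal V_W$, then $U\subseteq V$ for some $V\in\mathcal V_W$ with $p\in\overline V$, forcing $V=W'$. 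So state and use the lemma with $p\in\overline U$ in place of $p\in U$; with these two adjustments the argument is complete.
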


\begin{proof}
	Let $f$, $\Lambda$, and $\{\mathcal U_\lambda\}$ be as described. Fix $x\in X$. For each $\lambda\in\Lambda$, choose $U_\lambda(x)\in\mathcal U_\lambda$ with $x\in U_\lambda(x)$. Then $x\in\bigcap\overline{U_\lambda(x)}$. Furthermore, for all $y\in X\setminus\{x\}$, there exists a cover $\mathcal U$ of $X$ such that if $x\in U\in\mathcal U$, then $y\notin\overline U$. Then for any $\lambda\in\Lambda$ with $\mathcal U_\lambda$ refining $\mathcal U$, $y\notin\overline{U_\lambda(x)}$ regardless of choice of $U_\lambda(x)$, and hence $y\notin  \bigcap\overline{U_\lambda(x)}$.
	
	Now, for each $\lambda$, it is straightforward to show that $\pi_0(\sigma^n(\mathcal O(\mathcal U_\lambda)\cap\pi_0^{-1}(\overline {U_\lambda(x)})))$ is equal to $f^n(\overline {U_\lambda(x)})$. In particular, $f^n(x)\in \bigcap \pi_0(\sigma^n(\mathcal O(\mathcal U_\lambda)\cap\pi_0^{-1}(\overline {U_\lambda(x)})))$. Suppose now that $z\in \bigcap \pi_0(\sigma^n(\mathcal O(\mathcal U_\lambda)\cap\pi_0^{-1}(\overline {U_\lambda(x)})))$. Then for each $\lambda$, there exists $x_\lambda\in U_\lambda(x)$ with $z=f^n(x_\lambda)$. But, by construction, $x$ is a limit point of $\{x_\lambda\}$, and by continuity, $z=f^n(x)$. Hence $\{f^n(x)\}=\bigcap \pi_0(\sigma^n(\mathcal O(\mathcal U_\lambda)\cap\pi_0^{-1}(\overline {U_\lambda(x)})))$ as claimed.
\end{proof}

It should be noted that for general Hausdorff spaces, the structure of $\{\mathcal U_\lambda\}$ may be quite complex, but for metric $X$, it is the case that a sequence of covers will always suffice and we will make use of this fact in the following sections.  In the metric case, Theorem \ref{CoversEncode} is equivalent to Theorem 3.9 of \cite{shimomura-special}, although that result is expressed in terms of graph covers and relations.

\section{Characterizing shadowing in totally disconnected spaces}\label{TotDisc}

In the sense of Theorem \ref{CoversEncode}, the entire dynamics are encoded by the action of $f$ on an appropriate collection of refining covers.  This is not unlike the way that the topology is completely encoded as well. In this section we explore this analogy.

In particular, it is well known that a space $X$ is \emph{chainable}, i.e. can be encoded with a sequence of refining \emph{chains} (i.e. finite covers with $U_i\cap U_{j}\neq\ns$ if and only if $|i-j|\leq1$) if and only if $X$ can be written as an inverse limit of arcs. In a sense, the arc is the \emph{fundamental} chainable object. In an analogous fashion we show that shifts of finite type are the fundamental objects among dynamical systems on totally disconnected spaces with shadowing.

For a dynamical system $\sys{f}{X}$ with $X$ totally disconnected in addition to compact Hausdorff, then the covers $\mathcal U_\lambda$ in Theorem \ref{CoversEncode} can be taken to consist of clopen sets which are pairwise disjoint. In this case, let $\mathcal U$ and  $\mathcal V$ be finite clopen pairwise disjoint covers of $X$ with $\mathcal V$ refining $\mathcal U$. Then let $\iota:\mathcal V\to\mathcal U$ be defined so that $V\cap \iota(V)\neq\ns$, which in this context is equivalent to $V\subseteq\iota(V)$. Critically, for $\iota$ to be a function, the covers must consist of pairwise disjoint sets; it is this that creates the obstacle to dealing with non-totally disconnected spaces which  we address in Section \ref{Hausdorff}.
The map $\iota$ naturally induces a continuous map $\iota:\mathcal V^\omega\to \mathcal U^\omega$, the domain of which can then be restricted to $\mathcal O(\mathcal V)$ or $\mathcal{PO}(\mathcal V)$ as appropriate. As the intended domain is typically clear, the symbol $\iota$ will be used for all.

\begin{lemma}\label{OrbitSpaceRelations}
	Let $f:X\to X$ be a map on the compact, totally disconnected Hausdorff space $X$ and let $\mathcal U$ and $\mathcal V$ be finite clopen pairwise disjoint covers of $X$ with $\mathcal V$ refining $\mathcal U$. Then $\sigma$ and $\iota$ commute  and the following statements hold:
	\begin{enumerate}
		\item $\iota(\mathcal O(\mathcal V))=\mathcal O(\mathcal U)$.
		\item $\mathcal O(\mathcal U)\subseteq\iota(\mathcal {PO}(\mathcal V))\subseteq\mathcal {PO}(\mathcal U)$.
	\end{enumerate}
\end{lemma}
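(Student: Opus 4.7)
The plan is to exploit the fact that all sets involved are clopen (so closures are redundant) and the cover elements are pairwise disjoint (so each point of $X$ lies in a unique element of $\mathcal V$ and a unique element of $\mathcal U$, and in particular $V\subseteq \iota(V)$ is the unique element of $\mathcal U$ meeting $V$). Commutativity of $\sigma$ and $\iota$ is immediate from the definitions: $\iota$ is applied coordinatewise while $\sigma$ merely shifts indices, so $(\sigma\circ\iota)\seq{V_i} = \seq{\iota(V_{i+1})} = (\iota\circ\sigma)\seq{V_i}$.

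For (1), the inclusion $\iota(\mathcal O(\mathcal V))\subseteq\mathcal O(\mathcal U)$ is direct: given $\seq{V_i}\in\mathcal O(\mathcal V)$ with witness $z$, the same $z$ witnesses $\seq{\iota(V_i)}\in\mathcal O(\mathcal U)$ since $f^i(z)\in V_i\subseteq\iota(V_i)$. For the reverse inclusion, given $\seq{U_i}\in\mathcal O(\mathcal U)$ with witness $z$, I would define $V_i$ to be the unique element of $\mathcal V$ containing $f^i(z)$; this is well-defined since $\mathcal V$ is a pairwise disjoint cover. Then $\seq{V_i}\in\mathcal O(\mathcal V)$ with the same witness $z$, and I must check $\iota(V_i)=U_i$: both contain $f^i(z)$, and pairwise disjointness of $\mathcal U$ forces the equality. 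Hence $\seq{U_i}=\iota\seq{V_i}\in\iota(\mathcal O(\mathcal V))$.

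For (2), the first inclusion $\mathcal O(\mathcal U)\subseteq\iota(\mathcal{PO}(\mathcal V))$ follows formally from (1) together with $\mathcal O(\mathcal V)\subseteq\mathcal{PO}(\mathcal V)$ established in Lemma \ref{FirstOrbitSpaceRelations}. The second inclusion $\iota(\mathcal{PO}(\mathcal V))\subseteq\mathcal{PO}(\mathcal U)$ again propagates witnesses directly: if $\seq{x_i}$ witnesses $\seq{V_i}\in\mathcal{PO}(\mathcal V)$ in the sense that $x_{i+1},f(x_i)\in V_{i+1}$, then $x_{i+1},f(x_i)\in\iota(V_{i+1})$, so the same $\seq{x_i}$ witnesses $\seq{\iota(V_i)}\in\mathcal{PO}(\mathcal U)$.

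There is no real obstacle here; the only subtlety is that the pairwise disjoint clopen structure is used in two distinct roles, first to make $\iota$ a well-defined function at all, and second to identify $\iota(V_i)$ with the unique element of $\mathcal U$ containing a chosen point, which is what powers the surjectivity in (1). Writing the argument out is then a matter of unpacking definitions using Lemma \ref{AltDesc}.
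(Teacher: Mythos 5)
Your proposal is correct and follows essentially the same route as the paper: commutativity is read off from the coordinatewise definition of $\iota$, the forward inclusion in (1) propagates the witness through $V_i\subseteq\iota(V_i)$, the reverse inclusion picks for each $i$ the unique $V_i\in\mathcal V$ containing $f^i(z)$ and uses pairwise disjointness of $\mathcal U$ to force $\iota(V_i)=U_i$, and (2) follows by the same witness-propagation (the paper simply says it ``follows similarly''). Your write-up is, if anything, slightly more careful than the paper's, which elides the $f^{-i}$ in its description of the witness point.
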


\begin{proof}
	It is immediate from their definitions that $\sigma$ and $\iota$ commute on their unrestricted domains.
	
	Towards proving statement (1), consider $\seq {V_i}\in\mathcal O(\mathcal V)$. By Lemma \ref{AltDesc}, $\bigcap\overline V_i\neq\ns$, and it follows that $\bigcap\overline {\iota(V_i)}\neq\ns$, and hence $\seq{\iota(V_i)}\in\mathcal O(\mathcal U)$. Conversely, if $\seq{U_i}\in\mathcal O(\mathcal U)$, then we can choose  $x\in \bigcap\overline U_i\neq\ns$. Now choose $\seq {V_i}$ so that $x\in \overline V_i$ for each $i$. Clearly $\seq{V_i}\in\mathcal O(\mathcal V)$. Since $\mathcal V$ refines $\mathcal U$ and the elements of $\mathcal U$ are pairwise disjoint and clopen, it follows that $V_i\subseteq U_i$, i.e. $\seq{U_i}=\iota\seq{V_i}$.
	
	Statement (2) follows similarly.
	
\end{proof}

The additional structure of totally disconnected spaces allows us to state the following immediate corollary to Theorem \ref{CoversEncode}.  In particular, the collection $\pdoc(X)$ of finite pairwise disjoint covers is a cofinal directed subset of $\toc(X)$.

\begin{corollary}\label{TotDiscCoversEncode}
	Let $f:X\to X$ be a map on the compact Hausdorff totally disconnected space $X$. Let $\{\mathcal U_\lambda\}_{\lambda\in\Lambda}$ be a cofinal directed suborder of $\pdoc(X)$.
	
	Then the system $\sys{f}{X}$ is conjugate to $\sys{\sigma^*}{\invlim{\iota}{\mathcal O(\mathcal U_\lambda)}}$ 
	by the map
	\[\seq{w_\lambda}\mapsto \bigcap\overline{\pi_0(w_\lambda)}. \]
\end{corollary}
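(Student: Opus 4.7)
The plan is to verify that the stated map $\phi:\invlim{\iota}{\mathcal O(\mathcal U_\lambda)}\to X$, $\phi(\seq{w_\lambda})=\bigcap\overline{\pi_0(w_\lambda)}$, is a well-defined continuous bijection intertwining $\sigma^*$ with $f$; since both spaces are compact Hausdorff, this immediately promotes $\phi$ to a topological conjugacy. First I would confirm that $(\iota,\mathcal O(\mathcal U_\lambda))$ is a genuine inverse system of dynamical systems: the identity and composition axioms are trivial because for pairwise disjoint clopen covers refinement is just containment, surjectivity of the bonding maps restricted to orbit spaces is Lemma \ref{OrbitSpaceRelations}(1), and commutation of $\iota$ with $\sigma$ is the first clause of that same lemma, so $\sigma^*$ is well-defined.

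To see $\phi$ is well-defined, note that for any thread $\seq{w_\lambda}$ the closures $\overline{\pi_0(w_\lambda)}$ form a downward-directed family (by refinement plus disjointness) of nonempty closed subsets of the compact space $X$, hence have nonempty intersection. Since $\{\mathcal U_\lambda\}$ is cofinal in $\pdoc(X)$ and clopen pairwise disjoint covers separate points in a compact totally disconnected Hausdorff space, the intersection is a singleton. Continuity of $\phi$ is the standard inverse-limit argument using the basic open sets $\pi_\lambda^{-1}(\{U\})$. The intertwining with the dynamics follows because $\pi_0\circ\sigma=\pi_1$, so $\phi(\sigma^*\seq{w_\lambda})=\bigcap\overline{\pi_1(w_\lambda)}$, which equals $f(\phi(\seq{w_\lambda}))$ by the $n=1$ conclusion of Theorem \ref{CoversEncode} (or directly from continuity of $f$ applied to the singleton intersection).

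For bijectivity, injectivity is immediate: distinct threads differ at some coordinate $\lambda$, and distinct pairwise-disjoint clopen sets have disjoint closures, so the corresponding $\phi$-values lie in disjoint sets. Surjectivity exploits the rigidity of pairwise disjoint covers: given $x\in X$, let $U_\lambda(x)$ denote the \emph{unique} element of $\mathcal U_\lambda$ containing $x$, and set $w_\lambda = \seq{U_\lambda(f^i(x))}_{i\in\omega}$. Then $w_\lambda\in\mathcal O(\mathcal U_\lambda)$ with $x$ itself as a witness, and $\iota$-compatibility across $\lambda\le\eta$ is automatic since $\iota(U_\eta(y))$ is forced to equal the unique element of $\mathcal U_\lambda$ containing $y$, namely $U_\lambda(y)$. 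By the singleton argument above, $\phi(\seq{w_\lambda})=x$.

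The one genuinely delicate point is precisely the coherence of the choices $U_\lambda(x)$: without pairwise disjointness one would have multiple candidates at each level and no obvious uniform way to pick $\iota$-compatible ones. This is exactly the obstruction that the later sections on general metric spaces are designed to circumvent; under the totally disconnected hypothesis the corollary is essentially a direct repackaging of Theorem \ref{CoversEncode} through the coherent choice functions $x\mapsto U_\lambda(x)$.
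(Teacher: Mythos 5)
Your overall strategy is the right one and matches what the paper intends (the paper offers no separate proof, treating the statement as an immediate consequence of Theorem \ref{CoversEncode}): verify the inverse system via Lemma \ref{OrbitSpaceRelations}, use cofinality in $\pdoc(X)$ and the separation of points by disjoint clopen covers to get singleton intersections, build the canonical thread $\lambda\mapsto\seq{U_\lambda(f^i(x))}_{i\in\omega}$ for surjectivity, and read off equivariance from $\pi_0\circ\sigma=\pi_1$. The well-definedness, continuity, surjectivity and intertwining paragraphs are all essentially sound.

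There is, however, a genuine gap in the injectivity step. You argue that distinct threads differ at some coordinate $\lambda$ and that ``distinct pairwise-disjoint clopen sets have disjoint closures, so the corresponding $\phi$-values lie in disjoint sets.'' But the coordinate $w_\lambda$ is not a single clopen set: it is a point of $\mathcal O(\mathcal U_\lambda)\subseteq\mathcal U_\lambda^\omega$, and $\phi$ only reads its $0$-th entry $\pi_0(w_\lambda)$. Two threads can satisfy $w_\lambda\neq v_\lambda$ while $\pi_0(w_\lambda)=\pi_0(v_\lambda)$ --- at a fixed level $\lambda$ there are in general many orbit patterns with the same initial symbol, coming from different witnesses $z\in U_\lambda(x)$ whose orbits later enter different cover elements --- so your argument as written only handles threads that already disagree in the $0$-th shift coordinate at some level. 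The fix is short: if $w_\lambda\neq v_\lambda$ then $(w_\lambda)_n\neq(v_\lambda)_n$ for some $n$, and by compatibility $(w_\gamma)_n\neq(v_\gamma)_n$ for all $\gamma\geq\lambda$. On the other hand the sets $Z_\gamma=\bigcap_i f^{-i}\big(\overline{(w_\gamma)_i}\big)$ are nonempty, compact and downward directed, so choosing $z\in\bigcap_\gamma Z_\gamma$ gives $z=\phi(\seq{w_\gamma})$ and $f^n(z)\in (w_\gamma)_n$ for every $\gamma$ and $n$; that is, $(w_\gamma)_n$ is forced to be the unique member of $\mathcal U_\gamma$ containing $f^n\big(\phi(\seq{w_\gamma})\big)$, so a thread is completely determined by its image under $\phi$. (Equivalently, once equivariance is established, apply your $n=0$ argument to $\sigma^{*n}$ of the two threads.) This is precisely the point the paper is careful about in the analogous injectivity argument of Theorem \ref{Conjugacy}, where the disagreement is first traced to a specific shift index $n$ before intersecting over $\gamma$; the same $Z_\gamma$ device also tightens your slightly informal justification of $\bigcap\overline{\pi_1(w_\lambda)}=\{f(x)\}$ in the intertwining step.
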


It is important to note that the maps $\iota$ in the inverse system depend very much on their domain and range. However,  if $\mathcal W$ refines $\mathcal V$ which in turn refines $\mathcal U$, then the composition of $\iota:\mathcal W\to\mathcal V$ and $\iota:\mathcal V\to\mathcal U$ is precisely the same as $\iota:\mathcal W\to\mathcal U$, and as such the inverse system is indeed well-defined.

The existence of pairwise disjoint covers also allows us to state the following alternative characterization of shadowing.

\begin{lemma}
	Let $f:X\to X$ be a map on the compact Hausdorff totally disconnected space $X$. Then $f$ has shadowing if and only if for each $\mathcal U\in\pdoc(X)$, there exists $\mathcal V\in\pdoc(X)$ which refines $\mathcal U$ such that for all $\mathcal W\in\pdoc(X)$ which refine $\mathcal V$, $\iota(\mathcal {PO}(\mathcal W))= \mathcal O(\mathcal U)$.
\end{lemma}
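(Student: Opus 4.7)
The plan is to prove both implications directly, exploiting the combinatorial simplification that pairwise disjoint clopen covers afford: since $\overline W=W$ for each $W$ in a $\pdoc$, the characterization of $\mathcal{PO}(\mathcal U)$ in Lemma \ref{AltDesc} reduces to the purely combinatorial condition $f(W_i)\cap W_{i+1}\neq\ns$, and the refinement map $\iota$ is automatically single-valued since each point lies in a unique element of a $\pdoc$.

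For the forward direction, given $\mathcal U\in\pdoc(X)$, I would first invoke shadowing to extract a finite open cover witnessing $\mathcal U$-shadowing and then pass to a common $\pdoc$ refinement $\mathcal V$ of that witness and of $\mathcal U$; this is legitimate because $\pdoc(X)$ is cofinal in $\toc(X)$ on compact totally disconnected Hausdorff spaces, and any refinement of a shadowing witness is again a shadowing witness. For any $\mathcal W\in\pdoc(X)$ refining $\mathcal V$, the inclusion $\mathcal O(\mathcal U)\subseteq\iota(\mathcal{PO}(\mathcal W))$ is handed to me by Lemma \ref{OrbitSpaceRelations}(2) applied directly to the pair $(\mathcal U,\mathcal W)$. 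For the reverse inclusion, given $\seq{W_i}\in \mathcal{PO}(\mathcal W)$, I would choose $x_i\in W_i\cap f^{-1}(W_{i+1})$ (nonempty by the reformulated pseudo-orbit relation), note that $\seq{x_i}$ is then a genuine $\mathcal W$-pseudo-orbit and in particular a pseudo-orbit for the shadowing witness, and exploit pairwise disjointness of $\mathcal U$: the shadowing point $z$ must have $f^i(z)$ in the unique $\mathcal U$-element containing $x_i$, which is $\iota(W_i)$, so $\iota(\seq{W_i})\in\mathcal O(\mathcal U)$.

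For the backward direction, I would start from an arbitrary finite open cover $\mathcal U_0$ of $X$, refine to some $\mathcal U\in\pdoc(X)$, and apply the hypothesis to obtain $\mathcal V\in\pdoc(X)$ refining $\mathcal U$. Taking the trivial choice $\mathcal W=\mathcal V$ in the hypothesis gives $\iota(\mathcal{PO}(\mathcal V))=\mathcal O(\mathcal U)$. Given any $\mathcal V$-pseudo-orbit $\seq{x_i}$, I would let $V_i$ be the unique element of $\mathcal V$ containing $x_i$; then $\seq{V_i}\in\mathcal{PO}(\mathcal V)$ because $f(x_i),x_{i+1}\in V_{i+1}$ for every $i\geq 0$. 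The hypothesis then produces $z$ with $f^i(z)\in\iota(V_i)\supseteq V_i\ni x_i$, and since $\mathcal U$ refines $\mathcal U_0$, this means $z$ $\mathcal U_0$-shadows $\seq{x_i}$. The principal subtlety throughout is the careful use of pairwise disjointness, which forces the shadowing orbit into the intended images $\iota(W_i)$ rather than some other $\mathcal U$-neighbour of $x_i$; this is also precisely the feature that must be engineered away once the zero-dimensionality hypothesis is dropped in the subsequent section, so the proof itself is essentially a direct bookkeeping translation between the open-cover formulation of shadowing and the symbolic conditions encoded by $\mathcal{PO}$ and $\mathcal O$.
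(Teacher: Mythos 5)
Your proposal is correct and follows essentially the same route as the paper's proof: in the forward direction pairwise disjointness of $\mathcal U$ forces the shadowing pattern to equal $\iota$ of the pseudo-orbit pattern, and in the converse you take $\mathcal W=\mathcal V$ and read a shadowing point off $\bigcap f^{-i}(\overline{U_i})$ exactly as the paper does. The only cosmetic difference is that you reprove $\iota(\mathcal{PO}(\mathcal W))\subseteq\mathcal O(\mathcal U)$ directly for each refinement $\mathcal W$ (after extracting a genuine point-level pseudo-orbit from a symbolic one), whereas the paper proves it once for $\mathcal V$ and then sandwiches $\iota(\mathcal{PO}(\mathcal W))$ between $\mathcal O(\mathcal U)$ and $\iota(\mathcal{PO}(\mathcal V))$.
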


\begin{proof}
	Let $f$ have shadowing and let $\mathcal U\in\pdoc(X)$. Let $\mathcal V$ be the cover witnessing shadowing. Without loss of generality, $\mathcal V\in\pdoc(X)$. Now, let $\seq{x_i}$ be a $\mathcal{V}$-pseudo-orbit with  $\seq{V_i}$ its $\mathcal V$-pseudo-orbit pattern, and let $z\in X$ be a shadowing point with $\seq{U_i}$ its shadowing pattern. By definition, $x_i$ and $f^i(z)$ belong to $U_i$, and hence $V_i\cap U_i\neq\ns$, and since $\mathcal V$ refines $\mathcal U$ and the elements of $\mathcal U$ are disjoint, we have $V_i\subseteq U_i$, i.e. $\iota(V_i)=U_i$ and hence $\iota\seq{V_i}=\seq{U_i}$. Thus $\iota(\mathcal{PO}(\mathcal V))\subseteq\mathcal O(\mathcal U)$, and the reverse inclusion is given by Lemma \ref{OrbitSpaceRelations}, and thus the two sets are equal. Now, for any $\mathcal W\in\pdoc(X)$ which refines $\mathcal V$, observe that $\mathcal{O}(\mathcal U)\subseteq \iota(\mathcal {PO}(\mathcal W))\subseteq \iota(\mathcal {PO}(\mathcal V))=\mathcal{O}(\mathcal U)$,
	
	Conversely, suppose that $f$ has the stated property regarding open covers. Let $\mathcal U$ be a finite open cover of $X$. Since $X$ is totally disconnected, let $\mathcal U'\in\pdoc(X)$ which refines $\mathcal U$. Let $\mathcal V$ be the cover witnessing the property with respect to $\mathcal U'$. Now, let $\seq{x_i}$ be a $\mathcal V$-pseudo-orbit and let $\seq{V_i}$ be its $\mathcal V$-pseudo-orbit pattern. By the property, there exists $\seq{U'_i}\in\mathcal O(\mathcal U')$ with $\iota\seq{V_i}=\seq{U'_i}$. Now, let $z\in\bigcap f^{-i}(\overline U'_i)$. Then $x_i,f^i(z)\in\overline U'_i$ which in turn is a subset of some $U_i\in\mathcal U$. In particular, $z$ $\mathcal U$-shadows the $\mathcal V$-pseudo-orbit $\seq{x_i}$.
\end{proof}

In light of this, we have the following theorem.

\begin{theorem}\label{Conjugacy}
	Let $f:X\to X$ be a map with shadowing on the compact totally disconnected Hausdorff space $X$. Let $\{\mathcal U_\lambda\}_{\lambda\in\Lambda}$ be a cofinal directed suborder of $\pdoc(X)$.
	
	Then the system $\sys{\sigma^*}{\invlim{\iota}{\mathcal O(\mathcal U_\lambda)}}$ is conjugate to $\sys{\sigma^*}{\invlim{\iota}{\mathcal {PO}(\mathcal U_\lambda)}}$.
\end{theorem}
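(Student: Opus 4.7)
The plan is to show that the natural inclusion $\invlim{\iota}{\mathcal O(\mathcal U_\lambda)}\hookrightarrow\invlim{\iota}{\mathcal{PO}(\mathcal U_\lambda)}$, induced coordinate-wise by $\mathcal O(\mathcal U_\lambda)\subseteq\mathcal{PO}(\mathcal U_\lambda)$ (Lemma \ref{OrbitSpaceRelations}), is itself the conjugacy. Both inverse limits sit inside the same product $\prod_\lambda\mathcal U_\lambda^\omega$, so the inclusion is automatically a continuous injection of topological spaces. Since $\sigma^*$ acts coordinate-wise in each case by $\sigma$ in the appropriate coordinate, the inclusion also commutes with $\sigma^*$. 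Hence the entire content of the theorem reduces to checking that the inclusion is surjective, i.e.\ that every element of $\invlim{\iota}{\mathcal{PO}(\mathcal U_\lambda)}$ already has each coordinate living in $\mathcal O(\mathcal U_\lambda)$.

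To establish surjectivity, I would fix $\seq{p_\lambda}\in\invlim{\iota}{\mathcal{PO}(\mathcal U_\lambda)}$ and an arbitrary $\lambda\in\Lambda$, and show $p_\lambda\in\mathcal O(\mathcal U_\lambda)$. Apply the preceding lemma (the $\pdoc(X)$-characterization of shadowing) to $\mathcal U_\lambda$ to obtain a cover $\mathcal V\in\pdoc(X)$ refining $\mathcal U_\lambda$ with the property that $\iota(\mathcal{PO}(\mathcal W))=\mathcal O(\mathcal U_\lambda)$ whenever $\mathcal W\in\pdoc(X)$ refines $\mathcal V$. Because $\{\mathcal U_\mu\}_{\mu\in\Lambda}$ is cofinal in $\pdoc(X)$, some member of the system refines $\mathcal V$; combining this with directedness of $\Lambda$ yields an index $\mu\geq\lambda$ with $\mathcal U_\mu$ refining $\mathcal V$. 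Setting $\mathcal W=\mathcal U_\mu$ in the lemma gives $\iota\bigl(\mathcal{PO}(\mathcal U_\mu)\bigr)=\mathcal O(\mathcal U_\lambda)$, and the inverse-limit relation $p_\lambda=\iota(p_\mu)$ forces $p_\lambda\in\mathcal O(\mathcal U_\lambda)$, as desired.

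With each coordinate of $\seq{p_\lambda}$ lying in $\mathcal O(\mathcal U_\lambda)$, the identity map on $\prod_\lambda\mathcal U_\lambda^\omega$ restricts to a homeomorphism between the two inverse limits that intertwines $\sigma^*$ with itself, which is the required conjugacy. I expect the only real obstacle to be the bookkeeping that allows one to apply the shadowing lemma (which delivers an auxiliary cover $\mathcal V$ outside the indexed system) inside the fixed inverse system $\{\mathcal U_\lambda\}$; cofinality in $\pdoc(X)$ together with directedness of $\Lambda$ resolves this cleanly, and no further dynamical input is required beyond the characterization of shadowing already in hand.
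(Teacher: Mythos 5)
Your argument is correct, and it reaches the conclusion by a cleaner route than the paper does. The paper fixes, for each $\lambda$, an index $p(\lambda)\geq\lambda$ with $\iota(\mathcal{PO}(\mathcal U_{p(\lambda)}))=\mathcal O(\mathcal U_\lambda)$ and defines $\phi(\seq{w_\gamma})_\lambda=\iota(w_{p(\lambda)})$; it then has to verify well-definedness, surjectivity, and---the bulk of the work---injectivity, which it does via a nested-intersection argument exploiting the disjointness of the covers and the fact that $\bigcap_{\gamma\geq\lambda}\overline{(w_\gamma)_n}$ is a singleton. You instead observe that both inverse limits are sets of threads inside the same product $\prod_\lambda\mathcal U_\lambda^\omega$, and that the key lemma (the $\pdoc(X)$ characterization of shadowing), combined with cofinality of $\{\mathcal U_\lambda\}$ and directedness of $\Lambda$, forces every coordinate of a thread in $\invlim{\iota}{\mathcal{PO}(\mathcal U_\lambda)}$ to lie already in the corresponding orbit space: for $\mu\geq\lambda$ with $\mathcal U_\mu$ refining the witnessing cover $\mathcal V$, the thread relation $p_\lambda=\iota(p_\mu)$ puts $p_\lambda$ in $\iota(\mathcal{PO}(\mathcal U_\mu))=\mathcal O(\mathcal U_\lambda)$. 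Hence the two inverse limits coincide as subspaces of the product and the identity is the conjugacy; no injectivity argument is needed. In fact your observation subsumes the paper's: by the thread condition $\iota(w_{p(\lambda)})=w_\lambda$, so the paper's $\phi$ is literally the identity map on threads, and your set-equality statement is exactly what makes its codomain correct. What the paper's formulation buys is robustness---its injectivity argument is phrased so as to apply to a map genuinely defined by a choice function, which mirrors the machinery needed later in the non--totally-disconnected case---but for the theorem as stated your shorter argument is complete, with the dynamical input confined, as you say, to the characterization of shadowing already in hand.
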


\begin{proof}
For each $\lambda\in\Lambda$, let $p(\lambda)\geq\lambda$ such that $\iota(\mathcal {PO}(\mathcal U_{p(\lambda)}))=\mathcal O(\mathcal U_\lambda)$. Then, define the map $\phi: \invlim{\iota}{\mathcal {PO}(\mathcal U_\lambda)})\to \invlim{\iota}{\mathcal O(\mathcal U_\lambda)}$ as follows
	\[\phi(\seq {w_\gamma})_\lambda=\iota(w_{p(\lambda)})\]

That this is well-defined and continuous is a standard result in inverse limit theory. That it is a surjection follows from the surjectivity of $\iota$ from $\mathcal{PO}(\mathcal U_{p(\lambda)})$ onto $\mathcal O(\mathcal U_\lambda)$. As this is induced by the maps $\iota$, it will commute with $\sigma^*$. All that remains is to demonstrate injectivity.

To this end, suppose that $\seq{w_\gamma}\neq\seq{v_\gamma}$ but $\phi(\seq{w_\gamma})=\phi(\seq{v_\gamma})$.  Fix $\lambda\in\Lambda$ with $w_\lambda\neq v_\lambda$. Since these are elements of the inverse limit and the elements of $\mathcal U_\lambda$ are disjoint, for all $\gamma\geq\lambda$, we have $w_\gamma\neq v_\gamma$. In particular, then, there is an $n\in\omega$ with $(w_\lambda)_n\neq(v_\lambda)_n$, and hence for all $\gamma\geq\lambda$, $(w_\gamma)_n\neq(v_\gamma)_n$. Thus the nested intersections of these are nonempty and distinct, i.e.
\[\ns\neq\bigcap_{\gamma\geq\lambda}\overline{(w_\gamma)_n}\neq\bigcap_{\gamma\geq\lambda}\overline{(v_\gamma)_n}\neq\ns.\]

This intersection may be taken over fewer sets, in particular we may restrict to those indices $\gamma$ which are equal to $p(\eta)$ for some $\eta$, so we have
\[\ns\neq\bigcap_{\gamma\geq\lambda}\overline{(w_{p(\gamma)})_n}\neq\bigcap_{\gamma\geq\lambda}\overline{(v_{p(\gamma)})_n}\neq\ns.\]

On the other hand, since $\phi(\seq{w_\gamma})=\phi(\seq{v_\gamma})$, we have
\[\bigcap_{\gamma\geq\lambda}\overline{\iota(w_{p(\gamma)})_n}=\bigcap_{\gamma\geq\lambda}\overline{\iota(v_{p(\gamma)})_n}.\]
However all of these intersections are singletons and since $\iota(w_{p(\gamma)})_n\supseteq (w_{p(\gamma)})_n$, we have
\[\bigcap_{\gamma\geq\lambda}\overline{(w_{p(\gamma)})_n}=\bigcap_{\gamma\geq\lambda}\overline{\iota(w_{p(\gamma)})_n}=\bigcap_{\gamma\geq\lambda}\overline{\iota(v_{p(\gamma)})_n}=\bigcap_{\gamma\geq\lambda}\overline{(v_{p(\gamma)})_n},\]
which is a contradiction.

\end{proof}

This theorem complements Corollary \ref{TotDiscCoversEncode}, and by applying Lemma \ref{FirstOrbitSpaceRelations}, and the well-known fact that shifts of finite type have shadowing \cite{walters}, we have the following result.

\begin{corollary}\label{Sufficiency}
	Let $f:X\to X$ be a map with shadowing on the compact totally disconnected Hausdorff space $X$. Then $\sys{f}{X}$ is conjugate to an inverse limit of shifts of finite type.
\end{corollary}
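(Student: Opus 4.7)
The plan is to simply compose the three structural results that have already been set up in this section. By Corollary \ref{TotDiscCoversEncode}, any cofinal directed suborder $\{\mathcal U_\lambda\}_{\lambda\in\Lambda}$ of $\pdoc(X)$ (which exists since $X$ is compact Hausdorff and totally disconnected) yields a conjugacy between $\sys{f}{X}$ and the inverse limit system $\sys{\sigma^*}{\invlim{\iota}{\mathcal O(\mathcal U_\lambda)}}$. Since $X$ is totally disconnected, I would fix such a cofinal $\{\mathcal U_\lambda\}$ at the outset.

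Next, I would invoke Theorem \ref{Conjugacy}, which uses the shadowing hypothesis to replace each orbit space $\mathcal O(\mathcal U_\lambda)$ with the corresponding pseudo-orbit space $\mathcal{PO}(\mathcal U_\lambda)$. Specifically, $\sys{\sigma^*}{\invlim{\iota}{\mathcal O(\mathcal U_\lambda)}}$ is conjugate to $\sys{\sigma^*}{\invlim{\iota}{\mathcal{PO}(\mathcal U_\lambda)}}$. Composing with the conjugacy from the previous step, we obtain that $\sys{f}{X}$ is conjugate to $\sys{\sigma^*}{\invlim{\iota}{\mathcal{PO}(\mathcal U_\lambda)}}$.

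Finally, I would apply Lemma \ref{FirstOrbitSpaceRelations}, which tells us that each $\mathcal{PO}(\mathcal U_\lambda)$ is a $1$-step shift of finite type (with the shift map $\sigma$ as its dynamics, which matches the induced map $\sigma^*$ on each factor). Since the bonding maps $\iota$ are continuous surjections commuting with $\sigma$, the system $\sys{\sigma^*}{\invlim{\iota}{\mathcal{PO}(\mathcal U_\lambda)}}$ is by definition the inverse limit of an inverse system of shifts of finite type. Chaining the conjugacies gives the statement.

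There is no real obstacle here; the corollary is genuinely a three-line assembly of the already-proved components. The only thing to be slightly careful about is confirming that $\{\mathcal U_\lambda\}$ can be chosen to be cofinal in $\pdoc(X)$ (immediate from total disconnectedness and compactness, since finite pairwise disjoint clopen covers form a base), and that the bonding maps $\iota$ in the two inverse systems are compatible under the conjugacies, which is exactly what was verified in the proofs of Corollary \ref{TotDiscCoversEncode} and Theorem \ref{Conjugacy}.
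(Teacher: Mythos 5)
Your proposal is correct and is exactly the paper's argument: the corollary is obtained by chaining Corollary \ref{TotDiscCoversEncode}, Theorem \ref{Conjugacy}, and Lemma \ref{FirstOrbitSpaceRelations} (taking, e.g., $\Lambda=\pdoc(X)$ itself as the cofinal directed suborder). No gaps.
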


In fact, this is a complete characterization of totally disconnected systems with shadowing; the following is an immediate consequence of Corollary \ref{Sufficiency} and Theorem \ref{InverseLimitsPreserveShadowing}.

\begin{theorem}\label{TotallyDiscClass}
	Let $X$ be a compact,  totally disconnected Hausdorff space.  The map $f:X\to X$ has shadowing if and only if $\sys{f}{X}$ is conjugate to an inverse limit of shifts of finite type.
\end{theorem}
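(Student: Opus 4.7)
The theorem is essentially the assembly of two results that the paper has already put in place, so the plan is to package those results into an if-and-only-if statement rather than to do any new heavy lifting.

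For the forward implication, I would simply invoke Corollary \ref{Sufficiency}: if $X$ is compact, totally disconnected Hausdorff and $f:X\to X$ has shadowing, then choosing any cofinal directed subset $\{\mathcal{U}_\lambda\}_{\lambda\in\Lambda}$ of $\pdoc(X)$ yields, via Corollary \ref{TotDiscCoversEncode} and Theorem \ref{Conjugacy}, a conjugacy between $\sys{f}{X}$ and $\sys{\sigma^*}{\invlim{\iota}{\mathcal{PO}(\mathcal{U}_\lambda)}}$. Lemma \ref{FirstOrbitSpaceRelations} identifies each $\mathcal{PO}(\mathcal{U}_\lambda)$ as a $1$-step shift of finite type, so this is precisely an inverse limit of shifts of finite type. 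There is nothing further to do in this direction.

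For the reverse implication, I would argue as follows. By Walters' theorem, every shift of finite type has shadowing. By Theorem \ref{InverseLimitsPreserveShadowing}, any dynamical system conjugate to an inverse limit of maps with shadowing on compact spaces itself has shadowing. Since shifts of finite type are compact (they are closed subsets of a full shift on a finite alphabet), the hypothesis that $\sys{f}{X}$ is conjugate to an inverse limit of shifts of finite type places $\sys{f}{X}$ in exactly the class to which Theorem \ref{InverseLimitsPreserveShadowing} applies, and shadowing of $f$ follows at once.

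Since all of the real work has been done in the earlier sections, I do not expect any obstacle here; the role of the theorem is to record the equivalence by combining Corollary \ref{Sufficiency} with Theorem \ref{InverseLimitsPreserveShadowing}. The only thing worth double-checking is that shadowing, as defined topologically in Section~3, is genuinely a conjugacy invariant, which is immediate from the definition in terms of finite open covers (a conjugacy carries finite open covers to finite open covers and preserves the pseudo-orbit/shadowing relations on pattern space). With that observation the proof reduces to the two-line citation described above.
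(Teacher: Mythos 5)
Your proposal is correct and follows exactly the paper's own route: the forward direction is Corollary \ref{Sufficiency} (itself assembled from Corollary \ref{TotDiscCoversEncode}, Theorem \ref{Conjugacy}, and Lemma \ref{FirstOrbitSpaceRelations}), and the reverse direction combines Walters' theorem with Theorem \ref{InverseLimitsPreserveShadowing}. The paper likewise records the theorem as an immediate consequence of these two results, so there is nothing to add.
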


Of course, Theorem \ref{TotallyDiscClass} includes metric systems. However, if $X$ is metric, we may easily find sequences $\{\mathcal U_n\}_{n\in\N}$ of finite pairwise disjoint open covers which are cofinal directed suborders of $\pdoc(X)$. In particular, we can let $\mathcal U_0=\{X\}$, and for each $\mathcal U_i$, let $\mathcal U_{i+1}$ be a pairwise disjoint finite open cover of $X$ with mesh less than $2^{-1}$ which refines $\mathcal U_i$ and which witnesses shadowing for $\mathcal U_i$. Then the function $p$ from the proof of Theorem \ref{Conjugacy} simply increments its input. The conjugacy then follows from the induced diagonal map $\iota^*$ on the inverse systems as seen in Figure \ref{DiagonalMap}.

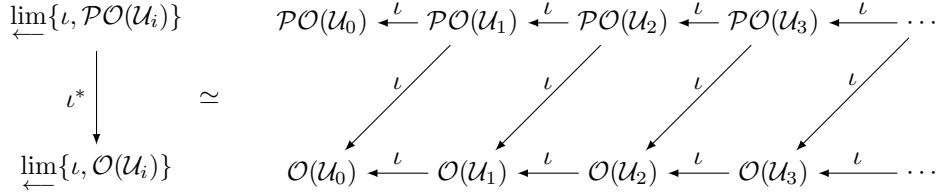
\begin{figure}[h]
	\begin{center}
		\begin{tikzpicture}[line cap=round,line join=round,x=1.0cm,y=1.0cm,scale=.5,>=latex]

\node at (0,0) (x0) {$\mathcal{O}(\mathcal U_0)$};
\node at (4,0) (x1) {$\mathcal{O}(\mathcal U_1)$};
\node at (8,0) (x2) {$\mathcal{O}(\mathcal U_2)$};
\node at (12,0) (x3) {$\mathcal{O}(\mathcal U_3)$};
\node at (16,0) (x4)  {$\cdots$};
\node at (-6,0) (xinf) {$\invlim{\iota}{\mathcal{O}(\mathcal U_i)}$};
\path (x1) edge[->] node[above]{$\iota$} (x0);
\path (x2) edge[->] node[above]{$\iota$} (x1);
\path (x3) edge[->] node[above]{$\iota$} (x2);
\path (x4) edge[->] node[above]{$\iota$} (x3);
\node at (0,4) (y0) {$\mathcal{PO}(\mathcal U_0)$};
\node at (4,4) (y1) {$\mathcal{PO}(\mathcal U_1)$};
\node at (8,4) (y2) {$\mathcal{PO}(\mathcal U_2)$};
\node at (12,4) (y3) {$\mathcal{PO}(\mathcal U_3)$};
\node at (16,4) (y4)  {$\cdots$};
\node at (-6,4) (yinf) {$\invlim{\iota}{\mathcal{PO}(\mathcal U_i)}$};
\path (y1) edge[->] node[above]{$\iota$} (y0);
\path (y2) edge[->] node[above]{$\iota$} (y1);
\path (y3) edge[->] node[above]{$\iota$} (y2);
\path (y4) edge[->] node[above]{$\iota$} (y3);
\path (y1) edge[->] node[above]{$\iota$} (x0);
\path (y2) edge[->] node[above]{$\iota$} (x1);
\path (y3) edge[->] node[above]{$\iota$} (x2);
\path (y4) edge[->] node[above]{$\iota$} (x3);
\path (yinf) edge[->] node[left]{$\iota^*$} (xinf);
\node at (-3,2) (mid) {$\simeq$};
\end{tikzpicture}
	\end{center}
	\caption{Diagram for the metric case of Theorem \ref{TotallyDiscClass}}\label{DiagonalMap}
\end{figure}

This observation immediately implies the following.

\begin{theorem}\label{cantor-shadowing} Let $X$ be the Cantor set, or indeed any compact, totally disconnected metric space.
  The map $f:X\to X$  has shadowing if and only if $\sys{f}{X}$ is conjugate to the inverse limit of a sequence of shifts of finite type.
\end{theorem}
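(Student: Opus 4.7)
The plan is to obtain this as a direct metric specialization of Theorem \ref{TotallyDiscClass}. The backward direction is immediate: each shift of finite type has shadowing by Walters' theorem, so the inverse limit does by Theorem \ref{InverseLimitsPreserveShadowing}, and conjugacy preserves shadowing. All the work lies in replacing the directed-set indexing by $\mathbb{N}$ in the forward direction.

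For the forward direction, I would build a cofinal \emph{sequence} $\{\mathcal{U}_n\}_{n\in\omega}$ in $\pdoc(X)$ by recursion, exactly as sketched just before Figure \ref{DiagonalMap}. Set $\mathcal{U}_0=\{X\}$. Given $\mathcal{U}_n$, let $\mathcal{V}_n\in\pdoc(X)$ be the refinement witnessing shadowing for $\mathcal{U}_n$ in the sense of the lemma preceding Theorem \ref{Conjugacy}, i.e.\ $\iota(\mathcal{PO}(\mathcal{V}_n))=\mathcal{O}(\mathcal{U}_n)$. Then let $\mathcal{U}_{n+1}\in\pdoc(X)$ be any common refinement of $\mathcal{V}_n$ whose mesh is at most $2^{-n-1}$; such a cover exists because $X$ is compact, metric, and totally disconnected, so clopen partitions of arbitrarily small mesh are available and we can intersect them with $\mathcal{V}_n$ to obtain a pairwise disjoint finite refinement. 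By the choice of mesh and a standard Lebesgue number argument, any $\mathcal{W}\in\pdoc(X)$ is refined by some $\mathcal{U}_n$, so this sequence is cofinal in $\pdoc(X)$.

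With this sequence in hand, Corollary \ref{TotDiscCoversEncode} gives the conjugacy
\[\sys{f}{X}\cong \sys{\sigma^*}{\invlim{\iota}{\mathcal{O}(\mathcal{U}_n)}}.\]
Because $\mathcal{U}_{n+1}$ already witnesses the shadowing condition for $\mathcal{U}_n$, the map $p$ in the proof of Theorem \ref{Conjugacy} may be taken to be $p(n)=n+1$, so the induced diagonal map
\[\iota^*\colon \invlim{\iota}{\mathcal{PO}(\mathcal{U}_n)}\longrightarrow \invlim{\iota}{\mathcal{O}(\mathcal{U}_n)}\]
of Figure \ref{DiagonalMap} is the promised conjugacy of systems. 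Finally, by Lemma \ref{FirstOrbitSpaceRelations}, each $\mathcal{PO}(\mathcal{U}_n)$ is a $1$-step shift of finite type, and the bonding maps $\iota$ are continuous surjections commuting with $\sigma$, so the right-hand space is exactly an inverse limit of a sequence of shifts of finite type, as required.

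The only place that needs genuine care is confirming both (a) that the inductively constructed $\{\mathcal{U}_n\}$ really is cofinal in $\pdoc(X)$, so Corollary \ref{TotDiscCoversEncode} applies, and (b) that the simplification $p(n)=n+1$ is legitimate in the proof of Theorem \ref{Conjugacy}. Point (a) is just the Lebesgue-number argument above, while (b) is a direct inspection of that proof, where the only use of $p$ was to arrange $\iota(\mathcal{PO}(\mathcal{U}_{p(\lambda)}))=\mathcal{O}(\mathcal{U}_\lambda)$, which our construction guarantees for the successor. Neither step is a genuine obstacle; the theorem is essentially a bookkeeping refinement of the totally disconnected Hausdorff case.
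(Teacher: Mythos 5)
Your proposal is correct and follows essentially the same route as the paper: the authors likewise build a recursive sequence $\mathcal U_0=\{X\}$, with $\mathcal U_{n+1}$ a small-mesh pairwise disjoint refinement witnessing shadowing for $\mathcal U_n$, note that cofinality in $\pdoc(X)$ follows from the mesh condition, take $p(n)=n+1$ in Theorem \ref{Conjugacy}, and read off the conjugacy from the diagonal map of Figure \ref{DiagonalMap}. Your additional remarks on the backward direction and on checking that $p(n)=n+1$ suffices are exactly the bookkeeping the paper leaves implicit.
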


This ad hoc construction of an appropriate sequence of covers can be modified into a technique that will apply to general compact metric spaces in Section \ref{Hausdorff}.

%
%

We complete the section with another characterization of dynamical systems with shadowing on totally disconnected compact metric spaces. Once again, this result is analogous to a characterization of chainable continua in terms of $\epsilon$-maps. Recall that an \emph{$\epsilon$-map} is a map $\phi:X\to Y$ such that for all $y\in Y$, $\phi^{-1}(y)$ has diameter less than $\epsilon$. It is a routine exercise to prove that if $\phi:X\to Y$ is an $\epsilon$-map and $Y$ is compact metric, then for each $c>0$, there exists $\eta>0$ such that for all $U\subseteq Y$ with diameter less than $\eta$, $\phi^{-1}(U)$ has diameter less than $\epsilon+c$.

\begin{theorem}
	Let $f:X\to X$ be a map on the totally disconnected compact metric space $X$. Then $f$ has shadowing if and only if for all $\epsilon>0$, there exists a shift of finite type $X_\epsilon$ and an $\epsilon$-map $\phi:X\to X_\epsilon$ such that $\phi$ is a semiconjugacy.
\end{theorem}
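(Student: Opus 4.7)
The theorem is an equivalence, and I address each direction separately; the forward direction is much harder.

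For the $(\Leftarrow)$ direction one uses the standard argument that an $\epsilon$-map factor onto a shadowing target yields shadowing on the source. Fix $\epsilon'>0$ and pick $\epsilon<\epsilon'/2$; by hypothesis there is a semiconjugacy $\phi:X\to X_\epsilon$ onto a shift of finite type that is an $\epsilon$-map. Apply the routine fact from the paragraph just above the theorem, with $c=\epsilon'/2-\epsilon$, to obtain $\eta>0$ such that any subset of $X_\epsilon$ of diameter less than $\eta$ has $\phi$-preimage of diameter less than $\epsilon'/2$. Shifts of finite type have shadowing, so there is $\delta_1>0$ witnessing $\eta/2$-shadowing in $X_\epsilon$. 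Using uniform continuity of $\phi$, choose $\delta>0$ with $d(a,b)<\delta\Rightarrow d(\phi(a),\phi(b))<\delta_1$. Every $\delta$-pseudo-orbit $\seq{x_i}$ in $X$ is carried by $\phi$, via $\phi f=\sigma\phi$, to a $\delta_1$-pseudo-orbit in $X_\epsilon$ that is $\eta/2$-shadowed by some $\phi(z)$ (using surjectivity of $\phi$); since $\phi(f^i(z))$ and $\phi(x_i)$ then lie in a common set of diameter less than $\eta$, the $\epsilon$-map property forces $d(f^i(z),x_i)<\epsilon'/2<\epsilon'$.

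For $(\Rightarrow)$, assume $f$ has shadowing and fix $\epsilon>0$. Apply Theorem \ref{cantor-shadowing} to write $\sys{f}{X}$ as $\invlim{\iota}{\mathcal{PO}(\mathcal U_n)}$ for a nested sequence of pairwise disjoint clopen covers with mesh tending to zero, each $\mathcal U_{n+1}$ chosen via the shadowing-witness lemma preceding Theorem \ref{Conjugacy}. Pick $n$ with $\mathrm{mesh}(\mathcal U_n)<\epsilon$, let $X_\epsilon:=\mathcal{PO}(\mathcal U_n)$ be the $1$-step SFT provided by Lemma \ref{FirstOrbitSpaceRelations}, and define $\phi:X\to X_\epsilon$ by $\phi(x)_i=V$, where $V\in\mathcal U_n$ is the unique cell containing $f^i(x)$. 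Disjointness of $\mathcal U_n$ makes $\phi$ continuous; the construction gives $\sigma\phi=\phi f$; and $\phi$ is an $\epsilon$-map because $\phi(x)=\phi(y)$ places $x$ and $y$ in the same cell of $\mathcal U_n$, of diameter less than $\epsilon$.

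The main obstacle is surjectivity of $\phi$ onto the SFT $\mathcal{PO}(\mathcal U_n)$: the image of the orbit pattern map is a priori only $\mathcal O(\mathcal U_n)$, and the inclusion $\mathcal O(\mathcal U_n)\subseteq\mathcal{PO}(\mathcal U_n)$ can be strict even under shadowing. To circumvent this, one chooses $\mathcal U_n$ so that $\mathcal O(\mathcal U_n)=\mathcal{PO}(\mathcal U_n)$, i.e.\ so that every $\mathcal U_n$-pseudo-orbit pattern is realized as the orbit pattern of some point of $X$; such a PDoC is Markov for $f$ in the sense of symbolic dynamics. Under shadowing, Markov PDoCs of arbitrarily small mesh can be obtained by combining the shadowing-witness lemma preceding Theorem \ref{Conjugacy} with the pullback refinements $\mathcal U\vee f^{-1}(\mathcal U)\vee\cdots\vee f^{-k}(\mathcal U)$ for sufficiently large $k$, exploiting the inverse limit representation of Theorem \ref{cantor-shadowing}. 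Once such a Markov $\mathcal U_n$ of mesh less than $\epsilon$ is in hand, $\phi$ is the desired surjective $\epsilon$-map semiconjugacy onto the SFT $X_\epsilon=\mathcal{PO}(\mathcal U_n)$, completing the proof.
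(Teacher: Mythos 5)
Your ``$\Leftarrow$'' direction is correct and is essentially the paper's own argument: push a $\delta$-pseudo-orbit forward to the SFT, shadow there, and pull the shadowing point back through the $\epsilon$-map using the routine fact about preimages of small sets. The ``$\Rightarrow$'' direction, however, has a genuine gap. The map you define (the $\mathcal U_n$-itinerary map) is exactly the right one, and your observations that it is continuous, intertwining and an $\epsilon$-map are fine; but your fix for surjectivity --- the claim that shadowing yields clopen partitions of arbitrarily small mesh with $\mathcal O(\mathcal U)=\mathcal{PO}(\mathcal U)$ --- is asserted, not proved, and the recipe you offer cannot deliver it. If $\mathcal U^{(k)}=\mathcal U\vee f^{-1}(\mathcal U)\vee\cdots\vee f^{-k}(\mathcal U)$, then the $\mathcal U^{(k)}$-itinerary of a point is just the $(k+1)$-block recoding of its $\mathcal U$-itinerary, so $\mathcal O(\mathcal U^{(k)})$ is conjugate to $\mathcal O(\mathcal U)$ via a higher-block map; one checks that $\mathcal O(\mathcal U^{(k)})=\mathcal{PO}(\mathcal U^{(k)})$ holds for some $k$ if and only if $\mathcal O(\mathcal U)$ was already a shift of finite type. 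Thus the pullback refinement never creates the Markov property --- it only detects whether it was there to begin with. Nor does the shadowing-witness lemma supply it: take $Y\subseteq\{0,1,2\}^\omega$ to be the $1$-step SFT of sequences of the form $0^a1 2^\infty$ together with $0^\infty$ and $2^\infty$, and $\mathcal U=\{[0]\cup[2],[1]\}$. Fine cylinder partitions witness $\mathcal U$-shadowing, yet $\mathcal O(\mathcal U)$ is the ``at most one $1$'' shift, which is not of finite type; so ``$\mathcal V$ witnesses shadowing for $\mathcal U$'' does not make $\mathcal O(\mathcal U)$ an SFT, and your claimed Markov partitions remain unestablished (indeed, the claim is essentially equivalent to the surjective form of the theorem you are trying to prove).

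The paper avoids this entirely: it takes $X_\epsilon=\mathcal{PO}(\mathcal U_n)$ (a $1$-step SFT by Lemma \ref{FirstOrbitSpaceRelations}) and lets $\phi$ be the projection of the inverse limit representation onto the $n$-th factor, which is a $2^{-n}$-map and satisfies $\sigma\circ\phi=\phi\circ f$; its image is $\mathcal O(\mathcal U_n)\subseteq\mathcal{PO}(\mathcal U_n)$, and no attempt is made to force the image to be all of the SFT. If you read ``semiconjugacy'' as requiring only the intertwining relation, your construction already works once you declare the codomain to be $\mathcal{PO}(\mathcal U_n)$ and delete the Markov-partition discussion. If you insist on surjectivity (as your own ``$\Leftarrow$'' argument does, when it lifts the shadowing point through $\phi$), then you must either prove the Markov-partition claim or supply a different argument; as written, that step does not stand.
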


\begin{proof}
	First, observe that if $f$ has shadowing, then it is conjugate to an inverse limit of a \emph{sequence} of shifts of finite type as per the discussion following Theorem \ref{TotallyDiscClass}. In this case, with the usual metric on the product, the projection map onto the $n$-th factor space is a $\frac{1}{2^n}$-map and is also a semiconjugacy as required.
	
	Now, suppose that for each $\gamma>0$, $f:X\to X$ is semiconjugate by a $\gamma$-map to a shift of finite type $X_\gamma$. Fix a particular $\epsilon>0$ and let $\phi:X\to X_{\epsilon/2}$ be an $\epsilon/2$-map and semiconjugacy. Now, find $\eta>0$ so that if $U\subseteq X_{\epsilon/2}$ has diameter less than $\eta$, then the diameter of $\phi^{-1}(U)$ is less than $\epsilon$. Since $X_{\epsilon/2}$ is a shift of finite type, the shift map has shadowing on $X_{\epsilon/2}$, so let us choose  $\delta'$ so that every $\delta$-pseudo-orbit is  $\eta$-shadowed. Finally, by uniform continuity of $\phi$ choose $\delta>0$ so that if $d(x,y)<\delta$ that $d(\phi(x),\phi(y))<\delta'$.
	
	Now, let $\seq{x_i}$ be a $\delta$-pseudo-orbit for $f$. Then $\seq{\phi(x_i)}$ is a $\delta'$-pseudo-orbit for $\sigma$. As such, there exists $z'\in X_{\epsilon/2}$ which $\eta$-shadows it, in particular, the diameter of $\{\sigma^i(z'),\phi(x_i)\}$ is less than $\eta$, and so the diameter of $\phi^{-1}(\sigma^i(z'))\cup \{x_i\}$ is less than $\epsilon$. Now, let $z\in\phi^{-1}(z')$. Then $f^i(z)\in \phi^{-1}(\sigma^i(z))$, and so $d(f^i(z),x_i)<\epsilon$, i.e. $z$ $\epsilon$-shadows $\seq{x_i}$.
\end{proof}

\section{Shadowing in Systems with Connected Components}\label{Hausdorff}

Theorem \ref{CoversEncode} applies equally well to systems in which there are non-trivial connected components, and as such, one might hope for analogue to Corollary \ref{TotDiscCoversEncode}.
%
%

However, as mentioned, the principal obstruction to a direct application of the methods of Section \ref{TotDisc} is that the intersection relation $\iota$ is no longer necessarily single-valued, so that the induced map on the pseudo-orbit space is not only set-valued, but also not finitely determined. However, by modifiying the approach illustrated in Figure \ref{DiagonalMap}, we are obtain the following.

\begin{theorem}\label{GenNecessary} Let $X$ be a compact Hausdorff space and $f:X\to X$ be continuous. 
	Suppose that $f$ has shadowing and that $\seq{\mathcal {U}_i}$ is a sequence of finite open covers satisfying the following properties:
	\begin{enumerate}
		\item $\mathcal U_{n+1}$ witnesses $\mathcal U_n$ shadowing,
		\item $\{\mathcal U_{i}\}$ is cofinal in $\toc(X)$, and
		\item for all $U\in\mathcal U_{n+2}$, there exists $W\in\mathcal U_n$ such that $st(U,\mathcal U_{n+1})\subseteq W$.
	\end{enumerate}
	Then there is an inverse sequence $(g_n^{n+1}, X_n)$ of shifts of finite type such that  $\sys{f}{X}$ is semiconjugate to $\sys{\sigma^*}{\invlim{g_n^{n+1}}{X_n}}$.
\end{theorem}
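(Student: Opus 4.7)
The plan is to mimic the totally disconnected construction in Figure \ref{DiagonalMap}, using the star condition (3) to resolve the multi-valuedness that arose when covers overlap.

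First, observe that condition (3) immediately yields $\mathcal{U}_{n+2} \prec \mathcal{U}_n$ in the refinement order, so the even-indexed subsequence $\mathcal{W}_n := \mathcal{U}_{2n}$ forms a refining chain cofinal in $\toc(X)$. Set $X_n := \mathcal{PO}(\mathcal{W}_n)$; by Lemma \ref{FirstOrbitSpaceRelations} each $X_n$ is a $1$-step shift of finite type over $\mathcal{W}_n$. Using (3), I fix a function $\psi_n \colon \mathcal{W}_{n+1} \to \mathcal{W}_n$ with $\operatorname{st}(V, \mathcal{U}_{2n+1}) \subseteq \psi_n(V)$ for each $V$, and define $g_n^{n+1} \colon X_{n+1} \to X_n$ coordinatewise by $\psi_n$. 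Since $V \subseteq \psi_n(V)$, coordinatewise application preserves the relation $f(\overline V) \cap \overline{V'} \neq \ns$, so $g_n^{n+1}$ lands in $X_n$; continuity and shift-equivariance are automatic.

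Second, I construct the semiconjugacy $\phi \colon \invlim{g_n^{n+1}}{X_n} \to X$. Compatibility in the inverse limit gives $w_{n+1}^i \subseteq w_n^i$ for each coordinate $i$, so $\bigcap_n \overline{w_n^0}$ is a decreasing intersection of nonempty closed sets, hence nonempty by compactness. The cofinality of $\{\mathcal{U}_n\}$ in $\toc(X)$ supplies a countable base for $X$, making it metrizable by Urysohn, and the mesh of $\mathcal{W}_n$ tends to $0$, so the intersection is a singleton. Define $\phi(\seq{w_n})$ to be this unique point. Continuity of $\phi$ is routine from the cylinder-set basis of the inverse-limit topology, and the identity $\phi \circ \sigma^* = f \circ \phi$ follows from the pseudo-orbit condition $f(\overline{w_n^0}) \cap \overline{w_n^1} \neq \ns$ together with continuity of $f$ and the collapse of the nested sets to singletons via a straightforward limit argument.

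Surjectivity is the principal obstacle, and the place where condition (1) enters essentially. For $\phi$, given $x \in X$, I would pick $w_n^i \in \mathcal{W}_n$ containing $f^i(x)$ and use the refinement from (3) to descend coherently via $\psi_n$; a compactness argument in $\prod_n X_n$ then extracts a fully compatible preimage sequence. For the bonding maps $g_n^{n+1}$, which must be surjective by the inverse-system definition, condition (1) provides the key leverage: given $\seq{U_i} \in X_n$ realised by a pseudo-orbit $\seq{x_i}$, shadowing produces a true orbit whose $\mathcal{W}_{n+1}$-pattern, selected using the slack afforded by the star condition, projects via $\psi_n$ back onto $\seq{U_i}$. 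The most delicate technical point I anticipate is ensuring that the restricted images at each level remain shifts of finite type, which I expect to follow from the $1$-block nature of the $\psi_n$-coded maps between finite alphabets in this specific setting.
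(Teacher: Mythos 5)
Your construction coincides with the paper's: the spaces $\mathcal{PO}(\mathcal U_{2n})$ as the $1$-step SFTs, the one-block bonding maps induced by a choice function $\psi_n$ with $st(V,\mathcal U_{2n+1})\subseteq\psi_n(V)$, and the semiconjugacy $\seq{w_n}\mapsto\bigcap\overline{w_n^0}$. Those parts are sound. Indeed $V\subseteq\psi_n(V)$ alone forces the coordinatewise map into $\mathcal{PO}(\mathcal U_{2n})$; the nested intersections are singletons because condition (3) makes the sequence a refining chain, which together with cofinality in $\toc(X)$ separates points; the identity $\phi\circ\sigma^*=f\circ\phi$ follows from $\bigcap_n\big(f(\overline{w_n^0})\cap\overline{w_n^1}\big)\neq\ns$ as you indicate; and your surjectivity argument for $\phi$ (push an orbit pattern for $x$ down through the $\psi_n$'s, extract a compatible thread by compactness of the finite-alphabet product) works. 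The one genuine difference from the paper is that you define $\phi$ directly on the inverse limit of the pseudo-orbit spaces, whereas the paper first uses shadowing to show that the coordinatewise map sends $\mathcal{PO}(\mathcal U_{2n+2})$ into the orbit space $\mathcal O(\mathcal U_{2n})$ and defines $\phi$ on the (conjugate) inverse limit of orbit spaces. Your shortcut is legitimate, but it means hypothesis (1) never enters any correct step of your argument --- which is consistent with the fact (see the sofic-shift remark in Section 6) that the bare conclusion ``semiconjugate to an inverse limit of SFTs'' does not actually require shadowing.

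The step that fails is your claimed surjectivity of the bonding maps $g_n^{n+1}$, and the proposed use of condition (1) there runs in the wrong direction. Shadowing converts a \emph{fine} pseudo-orbit into a \emph{coarse} orbit pattern: a $\mathcal U_{2n+2}$-pseudo-orbit is $\mathcal U_{2n+1}$-shadowed. Given $\seq{U_i}\in\mathcal{PO}(\mathcal U_{2n})$ realised by a $\mathcal U_{2n}$-pseudo-orbit $\seq{x_i}$, that pseudo-orbit need not be a pseudo-orbit at any finer scale, so there is nothing to shadow at level $2n+2$, and shadowing provides no $\seq{V_i}\in\mathcal{PO}(\mathcal U_{2n+2})$ with $\psi_n(V_i)=U_i$. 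In fact surjectivity is simply false in general: $\psi_n$ is an arbitrary choice function on a finite alphabet, and any $U\in\mathcal U_{2n}$ outside its range cannot appear in any coordinate of any point in the image. The paper concedes exactly this (``while these maps are not surjective\ldots'') and does not repair it; only the surjectivity of $\phi$ onto $X$ is needed, and the inverse limit is nonempty and $\sigma^*$-invariant without surjective bonding maps. Your closing worry about whether restricted images remain of finite type is a symptom of the same issue --- the eventual images need not be SFTs --- so the correct move is to drop the surjectivity claim rather than attempt to restrict to images.
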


\begin{proof}
	Let $f:X\to X$ and covers $\seq {\mathcal U_i}$ be as stated. For each $U\in\mathcal U_{n+2}$, fix $W(U)\in\mathcal U_n$ with $st(U,\mathcal U_{n+1})\subseteq W(U)$, and define $w:\mathcal{PO}(\mathcal U_{n+2})\to\prod\mathcal U_n$ by $w(\seq{U_j})=\seq{W(U_j)}$. Note that, as this is a single letter substitution map on a shift space, it is a continuous map and commutes with the shift map by definition.
	
	We claim that $w(\mathcal{PO}(\mathcal U_{n+2}))$ is a subset of $\mathcal{O}(\mathcal U_{n})$. Indeed, let $\seq{U_j}\in\mathcal{PO}(\mathcal U_{n+2})$ and $\seq{x_j}$ a pseudo-orbit with this pattern. Since $\mathcal U_{n+2}$ witnesses $\mathcal U_{n+1}$-shadowing, there exists $z\in X$ and sequence $\seq{V_j}\in \mathcal O(\mathcal U_{n+1})$ with $f^j(z),x_j\in V_j$. In particular, for any such $z$ and choices of $\seq{V_j}$,  $V_j\subseteq st(U_j,\mathcal U_{n+1})\subseteq W(U_j)$. Indeed, this establishes that $\seq{W(U_j)}\in\mathcal{O}(\mathcal U_{n})$. It should be noted that while $w$ is not necessarily surjective, for every $x\in X$, there is some $\seq{U_j}$ in $w(\mathcal{PO}(\mathcal U_{n+2}))$ with $f^j(x)\in U_j$ for all $j$. We can observe this by noting that $\seq{f^j(x)}$ is itself a $\mathcal U_{n+2}$-pseudo-orbit, and in particular, we have $f^j(x)\in W(U_j)$, and so $\seq{W(U_j)}$ is a $\mathcal{U_n}$ orbit pattern for $x$.
	
	Since $\mathcal O(\mathcal U_n)\subseteq\mathcal{PO}(\mathcal U_{n})$, we have the following diagram.

	\begin{figure}[h]
		\begin{center}
			\begin{tikzpicture}[line cap=round,line join=round,x=1.0cm,y=1.0cm,scale=.5,>=latex]

\node at (0,0) (x0) {$\mathcal{O}(\mathcal U_0)$};
\node at (4,0) (x9) {$\mathcal{O}(\mathcal U_1)$};
\node at (8,0) (x1) {$\mathcal{O}(\mathcal U_2)$};
\node at (12,0) (x10) {$\mathcal{O}(\mathcal U_3)$};
\node at (16,0) (x2) {$\mathcal{O}(\mathcal U_4)$};
\node at (20,2) (x3)  {$ $};
\node at (0,4) (y0) {$\mathcal{PO}(\mathcal U_0)$};
\node at (4,4) (y9) {$\mathcal{PO}(\mathcal U_1)$};
\node at (8,4) (y1) {$\mathcal{PO}(\mathcal U_2)$};
\node at (12,4) (y10) {$\mathcal{PO}(\mathcal U_3)$};
\node at (16,4) (y2) {$\mathcal{PO}(\mathcal U_4)$};
\node at (20,2) (y3)  {$ $};
\path (y1) edge[->] node[above]{$w$} (x0);
\path (y2) edge[->] node[above]{$w$} (x1);
\path (y3) edge[dashed, ->] node[above]{$$} (x2);
\path (x0) edge[left hook->] (y0);
\path (x1) edge[left hook->] (y1);
\path (x2) edge[left hook->] (y2);
\end{tikzpicture}
		\end{center}
		\caption{Diagram for the proof of Theorem \ref{GenNecessary}}\label{DiagonalMap2}
	\end{figure}
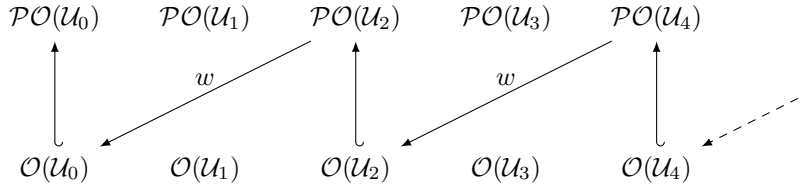
	
	So, while the `natural' map from $\mathcal{PO}({\mathcal U_{n+2}})$ is set-valued, the composition of inclusion and $w$ gives a single-valued continuous map from $\mathcal{PO}({\mathcal U_{n+2}})$ to $\mathcal{PO}({\mathcal U_{n}})$, and by reversing the order of composition, from $\mathcal{PO}({\mathcal U_{n+2}})$ to $\mathcal{PO}({\mathcal U_{n}})$. We will denote these maps by $\iota'$. Figure \ref{DiagonalMap3} then establishes the conjugacy between the inverse limits of the pseudo-orbit spaces and orbit spaces. Once again, while these maps are not surjective, it is important to note that they do preserve much of the structure. In particular, for each $x\in X$, and each $n\in\N$ and $k\in\omega$, there is some element of $w\circ(\iota')^n(\mathcal{PO}(\mathcal U_{2(n+1)+2k}))$ which is a $\mathcal U_{2k}$-orbit pattern for $x$. This follows immediately from the comments preceding Figure \ref{DiagonalMap2}.
	
	\begin{figure}[h]
		\begin{center}
			\begin{tikzpicture}[line cap=round,line join=round,x=1.0cm,y=1.0cm,scale=.5,>=latex]

\node at (0,0) (x0) {$\mathcal{O}(\mathcal U_0)$};
\node at (4,0) (x1) {$\mathcal{O}(\mathcal U_2)$};
\node at (8,0) (x2) {$\mathcal{O}(\mathcal U_4)$};
\node at (12,0) (x3) {$\mathcal{O}(\mathcal U_6)$};
\node at (16,0) (x4)  {$\cdots$};
\node at (-6,0) (xinf) {$\invlim{\iota'}{\mathcal{O}(\mathcal U_{2i})}$};
\path (x1) edge[->] node[above]{$\iota'$} (x0);
\path (x2) edge[->] node[above]{$\iota'$} (x1);
\path (x3) edge[->] node[above]{$\iota'$} (x2);
\path (x4) edge[->] node[above]{$\iota'$} (x3);
\node at (0,4) (y0) {$\mathcal{PO}(\mathcal U_0)$};
\node at (4,4) (y1) {$\mathcal{PO}(\mathcal U_2)$};
\node at (8,4) (y2) {$\mathcal{PO}(\mathcal U_4)$};
\node at (12,4) (y3) {$\mathcal{PO}(\mathcal U_6)$};
\node at (16,4) (y4)  {$\cdots$};
\node at (-6,4) (yinf) {$\invlim{\iota'}{\mathcal{PO}(\mathcal U_i)}$};
\path (y1) edge[->] node[above]{$\iota'$} (y0);
\path (y2) edge[->] node[above]{$\iota'$} (y1);
\path (y3) edge[->] node[above]{$\iota'$} (y2);
\path (y4) edge[->] node[above]{$\iota'$} (y3);
\path (y1) edge[->] node[above]{$w$} (x0);
\path (y2) edge[->] node[above]{$w$} (x1);
\path (y3) edge[->] node[above]{$w$} (x2);
\path (y4) edge[->] node[above]{$w$} (x3);
\path (x0) edge[left hook->] node[above]{$ $} (y0);
\path (x1) edge[left hook->] node[above]{$ $} (y1);
\path (x2) edge[left hook->] node[above]{$ $} (y2);
\path (x3) edge[left hook->] node[above]{$ $} (y3);
\path (yinf) edge[->] node[left]{$w^*$} (xinf);
\node at (-3,2) (mid) {$\simeq$};
\end{tikzpicture}
		\end{center}
		\caption{Diagram for the proof of Theorem \ref{GenNecessary}}\label{DiagonalMap3}
	\end{figure}
	
	All that remains is to establish that the inverse limit of orbit spaces is semiconjugate to the system $\sys{f}{X}$. Let $\phi:\invlim{\iota'}{\mathcal{O}(\mathcal U_{2i})}\to X$ be given by $\phi\seq{u_i}=\bigcap\overline{\pi_0(u_i)}$. Note that, by construction, $\pi_k(u_{i+1})\subseteq\pi_k(u_i)$ for all $k$ and $i$, so in particular $\phi(\seq{u_i})$ is a nested intersection of the closures of elements of the open covers, and hence is well-defined. That $\phi$ is continuous and commutes with $\sigma^*$ follows from similar reasoning as Theorem \ref{CoversEncode} and Corollary \ref{TotDiscCoversEncode}. All that remains is to establish that $\phi$ is surjective, but this follows from the aforementioned fact that for all $x\in X$, and all $k,n$, there exists a $\mathcal U_{2k}$-orbit pattern for $x$ in $w\circ(\iota')^n(\mathcal{PO}(\mathcal U_{2(n+1)+2k}))$.
	
	\end{proof}
	
	Clearly, the existence of a sequence of covers satisfying conditions (2) and (3) of Theorem \ref{GenNecessary} is a strong condition. In particular, the fact that this sequence is cofinal in $\toc(X)$ implies that the space $X$ is metrizable. In fact, it is the case that for all compact metric spaces, conditions (2) and (3) are easily satisfiable, yielding the following.
	
	\begin{corollary}\label{MetricGen}
		Let $f:X\to X$ be a map with shadowing on the compact metric space $X$. Then there is an inverse sequence $(g_n^{n+1}, X_n)$ of shifts of finite type such that $\sys{f}{X}$ is semiconjugate to $\sys{\sigma^*}{\invlim{g_n^{n+1}}{X_n}}$.
	\end{corollary}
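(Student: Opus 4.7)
The plan is to derive Corollary \ref{MetricGen} as a direct application of Theorem \ref{GenNecessary}. The entire content of the corollary beyond what Theorem \ref{GenNecessary} already supplies is the claim that in every compact metric space with $f$ having shadowing one can actually construct a sequence $\seq{\mathcal U_n}$ of finite open covers satisfying conditions (1)--(3); this is purely a refinement/Lebesgue-number exercise and requires no additional dynamical input.

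Fix a compatible metric $d$ on $X$. I would build $\seq{\mathcal U_n}$ inductively. Set $\mathcal U_0=\{X\}$ (or any finite taut open cover). Having chosen $\mathcal U_0,\dots,\mathcal U_n$, pick a Lebesgue number $\delta_j>0$ for each $\mathcal U_j$ with $j\leq n$. By the shadowing hypothesis and Lemma \ref{CompactShadowing}, pick a finite open cover $\mathcal V$ witnessing $\mathcal U_n$-shadowing, and let $\mathcal U_{n+1}$ be the common refinement of $\mathcal V$ with a finite cover of $X$ by open balls of radius less than $\tfrac{1}{3}\min\{\delta_0,\dots,\delta_n,\,1/(n+1)\}$ (passing to a slightly shrunk cover if tautness needs to be enforced, which is routine in metric spaces). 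Since any refinement of a shadowing witness is still a shadowing witness, $\mathcal U_{n+1}$ still witnesses $\mathcal U_n$-shadowing, so condition (1) is maintained.

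Verification of the remaining two conditions is direct. For condition (2), $\mathrm{mesh}(\mathcal U_n)\to 0$ by construction, and any finite open cover of a compact metric space admits a Lebesgue number, so any sufficiently fine cover refines it; hence $\{\mathcal U_n\}$ is cofinal in $\toc(X)$. For condition (3), fix $n$ and $U\in\mathcal U_{n+2}$. Both $U$ and every $V\in\mathcal U_{n+1}$ meeting $U$ have diameter less than $\delta_n/3$ by the mesh schedule, so the triangle inequality yields
\[
\mathrm{diam}\bigl(st(U,\mathcal U_{n+1})\bigr)<\delta_n,
\]
and the defining property of the Lebesgue number $\delta_n$ then provides $W\in\mathcal U_n$ with $st(U,\mathcal U_{n+1})\subseteq W$.

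The only real subtlety is interleaving the three conditions in a single induction step. Condition (1) forces $\mathcal U_{n+1}$ to refine a particular shadowing witness of $\mathcal U_n$; conditions (2) and (3) impose upper bounds on the mesh of $\mathcal U_{n+1}$ tied to Lebesgue numbers of all earlier covers. Because refining a shadowing witness preserves the witnessing property, the three constraints are simultaneously satisfiable at every stage, so no genuine obstacle appears beyond the bookkeeping of the mesh schedule. With such a sequence in hand, Theorem \ref{GenNecessary} immediately produces the inverse sequence $(g_n^{n+1},X_n)$ of shifts of finite type and the semiconjugacy from $\sys{\sigma^*}{\invlim{g_n^{n+1}}{X_n}}$ onto $\sys{f}{X}$ claimed by the corollary.
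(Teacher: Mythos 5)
Your proposal is correct and follows essentially the same route as the paper: both reduce the corollary to constructing a sequence of covers satisfying conditions (1)--(3) of Theorem \ref{GenNecessary}, starting from $\mathcal U_0=\{X\}$ and inductively taking $\mathcal U_{n+1}$ to be a shadowing witness for $\mathcal U_n$ with mesh controlled by (a third of) the relevant Lebesgue numbers, with the star-diameter estimate giving condition (3). Your version is slightly more explicit about forcing the mesh to zero (hence cofinality) and about tautness, but the argument is the same.
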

	
	\begin{proof}
		By Theorem \ref{GenNecessary}, we need only demonstrate the existence of a sequence of covers satisfying (1), (2), and (3). This is easily accomplished by taking $\mathcal U_0=\{X\}$, and inductively letting $\mathcal U_{n+1}$ be a cover witnessing $\mathcal U_n$-shadowing with mesh less than one third the Lebesgue number of the cover $\mathcal U_n$. Conditions (1) and (2) are immediately met. To verify that condition (3) is satisfied, fix $n\in\N$ and $U\in\mathcal U_{n+2}$. Then $U$ is a subset of $V$ for some $V\in\mathcal U_{n+1}$, and so $st(U,\mathcal U_{n+1})$ is a subset of $st(V,\mathcal U_{n+1})$. But the diameter of $st(V,\mathcal U_{n+1})$ is at most three times the mesh of $\mathcal U_{n+1}$, and hence has diameter less than the Lebesgue number for $\mathcal U_n$. Hence there is some $W\in\mathcal U_n$ for which $W\supseteq st(V,\mathcal U_{n+1})\supseteq st(V,\mathcal U_{n+1})$ as required.
	\end{proof}
	
\section{Factor maps which preserve shadowing}

We have now established that the existence of a semiconjugate inverse limit of an inverse sequence of shifts of finite type is necessary for a metric system to exhibit shadowing. However, it is worth noting that this is by no means sufficient. In particular, every sofic shift is semiconjugate to such an inverse limit, but only those that are shifts of finite type exhibit shadowing.

\begin{remark}
	Let $X$ be the subshift of $\{0,1\}^\Z$ consisting of those bi-infinite words containing at most one 1. The system $(\sigma,X)$ fails to have shadowing, but is semiconjugate to the inverse limit of an inverse sequence of shifts of finite type.
\end{remark}

\begin{proof}
	Let $Y$ be the subshift of $\{0,1,2\}^\Z$ consisting of those sequences in which the words $02,10,21$ and $20$ do not appear, i.e. $Y$ is the subshift of all sequences of the form $\cdots 0000001222222\cdots$ along with the constant sequences $\seq0$ and $\seq2$. $Y$ is a shift of finite type. Then $Y$ is (trivially) an inverse limit of shifts of finite type. However, the map from $Y$ to $X$ induced by substituting the symbol $0$ for $2$ is a semiconjugacy.
\end{proof}

It is then natural to ask which factors of inverse limits of shifts of finite type exhibit shadowing, i.e. is there a class of maps $\mathcal{P}$ such that if $\sys{f}{X}$ is a factor by a map in $\mathcal P$ of an inverse limit of shifts of finite type, then $\sys{f}{X}$ has shadowing? Of course, it is clear that there is such a class, and in fact the class of homeomorphisms have this property, but we wish to find, if possible, the maximal such class. Towards this end, we define the following.

\begin{dfn}
	Let $\sys{f}{X}$ and $\sys{g}{Y}$ be dynamical systems with $X$ and $Y$ compact Hausdorff spaces. A semiconjugacy $\phi:\sys{f}{X}\to \sys{g}{Y}$ \emph{lifts pseudo-orbits} provided that for every $\mathcal V_X\in\toc(X)$, there exists $\mathcal V_Y\in\toc(Y)$ such that if $\seq{y_i}$ is a $\mathcal V_Y$-pseudo-orbit in $Y$, then there is a $\mathcal V_X$-pseudo-orbit $\seq{x_i}$ in $X$ with $\seq{y_i}=\seq{\phi(x_i)}$.
\end{dfn}

\begin{theorem}\label{lifts-ps-orbs}
	Let $\sys{f}{X}$ and $\sys{g}{Y}$ be dynamical systems with $X$ and $Y$ compact Hausdorf. If $\sys{f}{X}$ has shadowing and $\phi:(f,X)\to(g,Y)$ is a semiconjugacy that lifts pseudo-orbits, then $\sys{g}{Y}$ has shadowing.
\end{theorem}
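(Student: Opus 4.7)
The plan is to chase an arbitrary open cover of $Y$ through the machinery in the natural way: pull back via $\phi$, apply shadowing in $X$, lift a pseudo-orbit via the hypothesis on $\phi$, shadow it, and push the shadowing point down through $\phi$.

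Concretely, fix an arbitrary finite open cover $\mathcal U_Y\in\toc(Y)$. Since $\phi$ is continuous and surjective, $\mathcal U_X:=\{\phi^{-1}(U):U\in\mathcal U_Y\}$ is a finite open cover of $X$. Apply the shadowing property of $f$ (in the topological formulation) to $\mathcal U_X$ to obtain a finite open cover $\mathcal V_X\in\toc(X)$ such that every $\mathcal V_X$-pseudo-orbit in $X$ is $\mathcal U_X$-shadowed by some point of $X$. Now invoke the pseudo-orbit-lifting hypothesis on $\phi$ with this $\mathcal V_X$ to obtain a cover $\mathcal V_Y\in\toc(Y)$ with the property that every $\mathcal V_Y$-pseudo-orbit in $Y$ is the image under $\phi$ of some $\mathcal V_X$-pseudo-orbit in $X$.

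I claim $\mathcal V_Y$ witnesses $\mathcal U_Y$-shadowing for $g$. Given a $\mathcal V_Y$-pseudo-orbit $\seq{y_i}$ in $Y$, choose a $\mathcal V_X$-pseudo-orbit $\seq{x_i}$ in $X$ with $\phi(x_i)=y_i$ for every $i$. Pick $z\in X$ which $\mathcal U_X$-shadows $\seq{x_i}$, so there is a sequence $\seq{U_i}$ in $\mathcal U_Y$ with $x_i,f^i(z)\in\phi^{-1}(U_i)$ for all $i$. Applying $\phi$ and using the semiconjugacy identity $\phi\circ f^i=g^i\circ\phi$ gives $y_i=\phi(x_i)\in U_i$ and $g^i(\phi(z))=\phi(f^i(z))\in U_i$, so $\phi(z)\in Y$ is the desired $\mathcal U_Y$-shadowing point for $\seq{y_i}$.

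There is essentially no obstacle here beyond bookkeeping: the crucial ingredients are (i) that preimages under a continuous surjection send finite open covers to finite open covers, (ii) that the semiconjugacy intertwines iterates of $f$ and $g$ so the shadowing pattern descends, and (iii) the pseudo-orbit lifting hypothesis, which is exactly what lets us move back and forth between pseudo-orbits in $Y$ and pseudo-orbits in $X$ with the same image. The one thing worth emphasizing in the write-up is that we must arrange the lift $\seq{x_i}$ to satisfy $\phi(x_i)=y_i$ on the nose (not merely nearby), so that pushing the shadowing pattern $\seq{U_i}$ forward through $\phi$ captures the original pseudo-orbit $\seq{y_i}$ rather than some perturbation of it; this exact-equality lift is precisely what the stated definition of \emph{lifts pseudo-orbits} provides.
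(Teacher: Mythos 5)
Your proof is correct and follows essentially the same route as the paper's: pull the cover on $Y$ back to $X$ (the paper takes any $\mathcal U_X$ with $\phi(\mathcal U_X)$ refining $\mathcal U_Y$, of which your preimage cover is the canonical instance), apply shadowing upstairs, lift the pseudo-orbit exactly, and push the shadowing point down via the semiconjugacy. The only cosmetic difference is that your choice $\mathcal U_X=\{\phi^{-1}(U):U\in\mathcal U_Y\}$ makes the final pushdown step immediate.
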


\begin{proof}
	Fix an open cover $\mathcal U_Y\in\toc(Y)$, and let $\mathcal U_X\in\toc(X)$ such that $\phi(U_X)$ refines $\mathcal U_Y$. Since $\sys{f}{X}$ has shadowing, let $\mathcal V_X\in\toc(X)$ witness shadowing with respect to $\mathcal U_X$.
	
	Since $\phi$ lifts pseudo-orbits, let $\mathcal V_Y$ witness this with respect to $\mathcal V_X$. Finally, let $\seq{y_i}$ be a $\mathcal V_Y$-pseudo-orbit.
	
	Pick $\seq{x_i}$ to be a $\mathcal V_X$-pseudo-orbit with $\seq{\phi(x_i)}=\seq{y_i}$. As every $\mathcal V_X$-pseudo-orbit is $\mathcal U_X$-shadowed, fix $z_X\in X$ to witness this and let $z_Y=\phi(z_X)$. It then follows that for each $i$, we have $\phi^i(z_X),\phi(x_i)\in\phi(U_{X,i})$ for some $U_i\in\mathcal U_X$. As $\phi(\mathcal U_X)$ refines $\mathcal U_Y$, it follows that there exists $U_{Y,i}\in\mathcal U_Y$ with $\phi^i(z_Y),y_i\in\phi(U_{Y,i})$, i.e. $z_Y=\phi(z_X)$ $\mathcal U_Y$-shadows $\seq{y_i}$.
\end{proof}

Notwithstanding Theorem \ref{lifts-ps-orbs}, a more general concept of lifting pseudo-orbits provides a much sharper insight into the relation between shadowing and shifts of finite type in compact metric systems.

\begin{dfn}
	Let $\sys{f}{X}$ and $(g,Y)$ be dynamical systems with $X$ and $Y$ compact Hausdorff spaces. A semiconjugacy $\phi:\sys{f}{X}\to (g,Y)$ \emph{almost lifts pseudo-orbits} (or $f$ \emph{is an ALP map}) provided that for every $\mathcal V_X\in\toc(X)$ and $\mathcal W_Y\in\toc(Y)$, there exists $\mathcal V_Y\in\toc(Y)$ such that if $\seq{y_i}$ is a $\mathcal V_Y$-pseudo-orbit in $Y$, then there is a $\mathcal V_X$-pseudo-orbit $\seq{x_i}$ in $X$ such that for each $i\in\N$ there exists $W_i\in\mathcal W_Y$ with $\phi(x_i),y_i\in W_i$.
\end{dfn}

\begin{theorem} \label{ALP-maps}
	Let $\sys{f}{X}$ and $\sys{g}{Y}$ be dynamical systems with $X$ and $Y$ compact Hausdorf and let $\phi:\sys{f}{X}\to(g,Y)$ be a semiconjugacy. Then the following statements hold:
	\begin{enumerate}
		\item if $\sys{f}{X}$ has shadowing and $\phi$ is an ALP map, then $\sys{g}{Y}$ has shadowing, and
		\item if $\sys{g}{Y}$ has shadowing then $\phi$ is an ALP map.
	\end{enumerate}
\end{theorem}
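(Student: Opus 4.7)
The plan is to treat the two implications separately; statement (2) is a quick consequence of shadowing together with surjectivity of $\phi$, while statement (1) requires a barycentric-style refinement argument in $Y$ to absorb the two layers of approximation (one from ALP, one from shadowing).

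For (2), suppose $\sys{g}{Y}$ has shadowing. Given $\mathcal V_X\in\toc(X)$ and $\mathcal W_Y\in\toc(Y)$, use shadowing of $g$ to choose $\mathcal V_Y\in\toc(Y)$ witnessing $\mathcal W_Y$-shadowing. Given a $\mathcal V_Y$-pseudo-orbit $\seq{y_i}$, let $\tilde z\in Y$ be an $\mathcal W_Y$-shadowing point and pick any $z\in\phi^{-1}(\tilde z)$, which exists because the factor map $\phi$ is surjective. Setting $x_i=f^i(z)$ produces a genuine orbit of $f$ (in particular a $\mathcal V_X$-pseudo-orbit), and $\phi(x_i)=g^i(\tilde z)$ shares an element of $\mathcal W_Y$ with $y_i$. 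Observe that in this direction the chosen $\mathcal V_Y$ does not even need to depend on $\mathcal V_X$.

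For (1), suppose $\sys{f}{X}$ has shadowing and $\phi$ is ALP, and fix $\mathcal U_Y\in\toc(Y)$. Since $Y$ is compact Hausdorff (hence normal), choose $\mathcal W_Y\in\toc(Y)$ so fine that whenever two members $W,W'\in\mathcal W_Y$ meet, some single $U\in\mathcal U_Y$ contains $W\cup W'$. Choose $\mathcal U_X\in\toc(X)$ refining $\{\phi^{-1}(W):W\in\mathcal W_Y\}$, so that $\phi(U)$ sits inside an element of $\mathcal W_Y$ for every $U\in\mathcal U_X$. Let $\mathcal V_X\in\toc(X)$ witness $\mathcal U_X$-shadowing of $f$, and let $\mathcal V_Y\in\toc(Y)$ be supplied by the ALP property applied to the pair $(\mathcal V_X,\mathcal W_Y)$. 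Given a $\mathcal V_Y$-pseudo-orbit $\seq{y_i}$, the ALP property yields a $\mathcal V_X$-pseudo-orbit $\seq{x_i}$ with $\phi(x_i),y_i\in W'_i\in\mathcal W_Y$; shadowing yields $z\in X$ with $f^i(z),x_i\in U_{X,i}\in\mathcal U_X$; setting $z_Y=\phi(z)$ gives $g^i(z_Y)=\phi(f^i(z))\in\phi(U_{X,i})\subseteq W''_i\in\mathcal W_Y$, with $\phi(x_i)\in W''_i$ as well. Since $W'_i$ and $W''_i$ both contain $\phi(x_i)$, the choice of $\mathcal W_Y$ furnishes $U_i\in\mathcal U_Y$ containing $W'_i\cup W''_i$, hence both $g^i(z_Y)$ and $y_i$, showing that $z_Y$ $\mathcal U_Y$-shadows $\seq{y_i}$.

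The main obstacle is arranging the cover $\mathcal W_Y$ in direction (1) so that overlapping pairs sit inside a common element of $\mathcal U_Y$; this is a standard normality/barycentric-refinement maneuver in compact Hausdorff spaces, and without it the two approximations cannot be composed. A minor secondary point is ensuring that each intermediate cover can be taken taut so as to genuinely lie in $\toc$, which is straightforward by shrinking.
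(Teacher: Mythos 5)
Your proposal is correct and follows essentially the same route as the paper's proof: part (2) via surjectivity of $\phi$ and lifting a genuine orbit of the shadowing point, and part (1) via a star/barycentric-type refinement $\mathcal W_Y$ of $\mathcal U_Y$ that absorbs the two successive approximations (ALP and shadowing) at the point $\phi(x_i)$. The only difference is cosmetic: you make explicit the (correct) observation that in part (2) the cover $\mathcal V_Y$ need not depend on $\mathcal V_X$, and you flag the tautness of the intermediate covers, both of which the paper leaves implicit.
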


\begin{proof}
	First, we prove statement (1). Let $\sys{f}{X}$ have shadowing and let $\phi$ be an ALP map. Fix an open cover $\mathcal U_Y\in\toc(Y)$. Let $\mathcal W_Y\in\toc(Y)$ such that if $W,W'\in\mathcal W_Y$ with $W\cap W'\neq\ns$, then there exists $V\in\mathcal V_Y$ with $W\cup W'\subseteq V$, and let $\mathcal U_X\in\toc(X)$ such that $\phi(U_X)$ refines $\mathcal W_Y$. Since $\sys{f}{X}$ has shadowing, let $\mathcal V_X\in\toc(X)$ witness shadowing with respect to $\mathcal U_X$.
	
	Since $\phi$ is ALP, let $\mathcal V_Y$ witness this with respect to $\mathcal W_Y$ and $\mathcal V_X$. Finally, let $\seq{y_i}$ be a $\mathcal V_Y$-pseudo-orbit.
	
	Pick $\seq{x_i}$ to be a $\mathcal V_X$-pseudo-orbit so that $\seq{\phi(x_i)}$ $\mathcal W_Y$-shadows $\seq{y_i}$. As every $\mathcal V_X$-pseudo-orbit is $\mathcal U_X$-shadowed, fix $z_X\in X$ to witness this and let $z_Y=\phi(z_X)$. It then follows that for each $i$, we have $\phi^i(z_X),\phi(x_i)\in\phi(U_{X,i})$ for some $U_i\in\mathcal U_X$. As $\phi(\mathcal U_X)$ refines $\mathcal W_Y$, it follows that there exists $W_i\in\mathcal W_Y$ with $\phi^i(z_Y),\phi(x_i)\in W_i$. Additionally, as pseudo-orbits are almost lifted, there exists $W'_i\in\mathcal W_Y$ with $\phi(x_i), y_i\in W'_i$. In particular, $\phi(x_i)\in W_i\cap W'_i$, so we have that there exists $U_{Y,i}\in\mathcal U_Y$ with $\phi^i(z_Y),\phi(x_i), y_i\in W_i\cup W'_i\subseteq U_{Y,i}\in \mathcal U_Y$, i.e $z_Y$ $\mathcal U_Y$-shadows $\seq{y_i}$.
	
	Now, to prove statement (2), assume that $\sys{g}{Y}$ has shadowing. let $\mathcal V_X\in\toc(X)$ and $\mathcal W_Y\in\toc(Y)$. Let $\mathcal V_Y\in\toc(Y)$ witness shadowing with respect to $\mathcal W_Y$. Now, let $\seq{y_i}$ be a $\mathcal V_Y$ pseudo-orbit in $Y$. Then there is some $z\in Y$ with $z$ $\mathcal W_Y$-shadowing $\seq{y_i}$. Now, choose $x\in\phi^{-1}(z)$ and observe that $\seq{f^i(x)}$ is a $\mathcal V_X$-pseudo-orbit (as it is in fact an orbit), and $\phi(f^i(x))=g^i(z)$, so there exists $W_i\in\mathcal W_Y$ with $\phi(f^i(z)),y_i\in W_i$ (as given by the shadowing pattern of $z$ and $\seq{y_i}$). Thus $\phi$ is an ALP map.
\end{proof}


One consequence of the above is that the (semi-)conjugacies in Theorems \ref{TotallyDiscClass} and \ref{GenNecessary} and in Corollary \ref{MetricGen} are ALP maps. This is not terribly surprising in the case of Theorem \ref{TotallyDiscClass}, as the map in question is a homeomorphism, but in the case of Theorem \ref{GenNecessary} and Corollary \ref{MetricGen}, this is interesting, and allows us to complete the classifications. It should be also be noted that, as a result of this theorem, we see that the factor maps constructed by Bowen in \cite{bowen-markov-partitions} are ALP.

To complete the characterization, we first translate the notion of almost lifting pseudo-orbits from the language of covers into the language of metric spaces. This is not completely necessary, but allows for a different perspective on the property. As this is a direct translation, we state the following without proof.

\begin{lemma}
	Let $\sys{f}{X}$ and $(g,Y)$ be dynamical systems with $X$ and $Y$ compact metric spaces. A semiconjugacy $\phi:\sys{f}{X}\to (g,Y)$ is an ALP map if and only if for all $\epsilon>0$ and $\eta>0$, there exists $\delta>0$ such that if $\seq{y_i}$ is a $\delta$-pseudo-orbit in $Y$, there exists an $\eta$-pseudo-orbit $\seq{x_i}$ in $X$ with $d(\phi(x_i),y_i)<\epsilon$.
\end{lemma}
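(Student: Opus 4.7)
The plan is to translate between metric quantities and open covers using Lebesgue numbers and mesh bounds, exactly in the spirit of the proof of Lemma \ref{CompactShadowing}. The result should fall out as soon as these translations are set up on both sides.

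For the forward direction (topological ALP implies metric ALP), fix $\epsilon>0$ and $\eta>0$. I would first choose $\mathcal W_Y\in\toc(Y)$ of mesh less than $\epsilon$ and $\mathcal V_X\in\toc(X)$ of mesh less than $\eta$; both exist by compactness. Apply topological ALP to this pair to obtain a witnessing cover $\mathcal V_Y\in\toc(Y)$, and let $\delta>0$ be a Lebesgue number for $\mathcal V_Y$. If $\seq{y_i}$ is a $\delta$-pseudo-orbit, then for each $i$ the set $\{g(y_i),y_{i+1}\}$ has diameter less than $\delta$ and so lies in some element of $\mathcal V_Y$, making $\seq{y_i}$ a $\mathcal V_Y$-pseudo-orbit. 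Topological ALP then yields a $\mathcal V_X$-pseudo-orbit $\seq{x_i}$ with $\phi(x_i)$ and $y_i$ in a common element of $\mathcal W_Y$; the mesh bound on $\mathcal W_Y$ gives $d(\phi(x_i),y_i)<\epsilon$, while the mesh bound on $\mathcal V_X$ makes $\seq{x_i}$ an $\eta$-pseudo-orbit.

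For the reverse direction (metric ALP implies topological ALP), fix $\mathcal V_X\in\toc(X)$ and $\mathcal W_Y\in\toc(Y)$. Let $\eta$ be a Lebesgue number for $\mathcal V_X$ and $\lambda$ a Lebesgue number for $\mathcal W_Y$, and set $\epsilon=\lambda/2$. Apply metric ALP to this choice of $\epsilon,\eta$ to obtain $\delta>0$, and pick any $\mathcal V_Y\in\toc(Y)$ of mesh less than $\delta$. Any $\mathcal V_Y$-pseudo-orbit $\seq{y_i}$ is then a $\delta$-pseudo-orbit, so metric ALP supplies an $\eta$-pseudo-orbit $\seq{x_i}$ in $X$ with $d(\phi(x_i),y_i)<\lambda/2$. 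Since $\{x_{i+1},f(x_i)\}$ has diameter less than $\eta$, it sits inside some element of $\mathcal V_X$, so $\seq{x_i}$ is a $\mathcal V_X$-pseudo-orbit; and since $\{\phi(x_i),y_i\}$ has diameter less than $\lambda$, it sits inside some $W_i\in\mathcal W_Y$, verifying the topological ALP condition.

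The argument is pure bookkeeping and I do not anticipate a substantive obstacle. The only mild subtlety is reconciling strict versus non-strict inequalities and open versus closed balls between the definitions of mesh, Lebesgue number, and $\delta$-pseudo-orbit; in each case one can absorb the slack by shrinking the chosen constant (as in taking $\epsilon=\lambda/2$ above) so that the relevant sets are forced to have diameter strictly below the governing threshold.
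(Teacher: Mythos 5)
Your proof is correct and is exactly the ``direct translation'' that the paper deliberately omits (the lemma is stated without proof), carried out via Lebesgue numbers and mesh bounds in the same way as the proof of Lemma \ref{CompactShadowing}. The only fact you use beyond bookkeeping is that a compact metric space admits finite \emph{taut} open covers of arbitrarily small mesh (needed since the covers in the ALP definition are drawn from $\toc(X)$ and $\toc(Y)$); this is standard and is assumed implicitly throughout the paper, so nothing further is required.
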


\begin{theorem} \label{GenClass} Let $X$ be a compact metric space. The map $f:X\to X$ has shadowing if and only if it is semiconjugate to the inverse limit of a sequence of shifts of finite type by a map which is ALP.
\end{theorem}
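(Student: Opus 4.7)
The plan is to deduce both directions of this equivalence essentially for free from the machinery assembled in the preceding sections. Each implication will reduce to a single application of the appropriate half of Theorem \ref{ALP-maps}, so no new technical content is needed --- just the correct bookkeeping of what is already available.

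For sufficiency, I would assume that $(f,X)$ is a factor of $\sys{\sigma^*}{\invlim{g_n^{n+1}}{X_n}}$ via an ALP semiconjugacy $\phi$, with each $X_n$ a shift of finite type. Walters' result guarantees that each $X_n$ has shadowing, and then Theorem \ref{InverseLimitsPreserveShadowing} lifts shadowing to the inverse limit system $\sys{\sigma^*}{\invlim{g_n^{n+1}}{X_n}}$ itself. Because $\phi$ is assumed ALP, Theorem \ref{ALP-maps}(1) then transfers shadowing along $\phi$ to the target $(f,X)$, which is exactly what is wanted.

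For necessity, I would invoke Corollary \ref{MetricGen}: assuming $f$ has shadowing on the compact metric space $X$, the corollary supplies an inverse sequence $(g_n^{n+1}, X_n)$ of shifts of finite type together with a semiconjugacy $\phi:\invlim{g_n^{n+1}}{X_n}\to X$ intertwining $\sigma^*$ with $f$. The only thing left to verify is that this specific $\phi$ is ALP. But since the hypothesis of the theorem is precisely that $(f,X)$ has shadowing, Theorem \ref{ALP-maps}(2), applied with $(f,X)$ in the role of the target system $(g,Y)$, tells us immediately that any semiconjugacy onto $(f,X)$ --- in particular the $\phi$ just produced --- is automatically ALP.

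There is no real obstacle here; the theorem is a clean packaging of the two halves of Theorem \ref{ALP-maps} together with Corollary \ref{MetricGen}. The only sanity check worth performing when writing this up is that the directions of the semiconjugacies in Corollary \ref{MetricGen} and in the statement of Theorem \ref{GenClass} match --- both treat the inverse limit as the source and $X$ as the target --- and that Theorem \ref{ALP-maps}(2) is genuinely strong enough to apply to the specific $\phi$ from Corollary \ref{MetricGen} without any further hypotheses, which it is, since it imposes no conditions on the particular form of the semiconjugacy beyond shadowing of the codomain.
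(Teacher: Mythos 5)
Your argument is correct and follows essentially the same route as the paper: the paper's own proof is a one-line citation of Corollary \ref{MetricGen} together with Theorem \ref{ALP-maps}(2) for the necessity direction, leaving the sufficiency direction implicit. You have usefully spelled out that sufficiency half (Walters' theorem for shadowing of each shift of finite type, Theorem \ref{InverseLimitsPreserveShadowing} for the inverse limit, and Theorem \ref{ALP-maps}(1) to push shadowing through the ALP factor map), which is exactly the intended reading.
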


\begin{proof}
	This follows immediately from Corollary \ref{MetricGen} and part (2) of Theorem \ref{ALP-maps}.
\end{proof}

%
%
%

Of course, it would be of significant benefit if ALP maps had an alternate characterization. In particular, it is clear that homeomorphisms and covering maps lift pseudo-orbits. However, there are maps which are neither covering maps nor homemorphisms which almost lift pseudo-orbits, in particular, the semiconjugacies given in Theorem \ref{GenNecessary} and Corollary \ref{MetricGen} are not typically open, much less covering maps.

\bibliographystyle{plain}
\bibliography{ComprehensiveBib}

\end{document}